\newtheorem{theorem}{Theorem}[section]
\newtheorem{lemma}[theorem]{Lemma}
\theoremstyle{definition}
\newtheorem{definition}[theorem]{Definition}
\newtheorem{cor}[theorem]{Corollary}
\newtheorem{remark}[theorem]{Remark}
\newtheorem{assumption}{Assumption}
\numberwithin{equation}{section}
\newcommand{\E}{\mathbb E}
\newcommand{\vE}{\vec E}
\newcommand{\tr}{\textnormal{Tr}}
\renewcommand{\epsilon}{\varepsilon}
\begin{document}

\title{Extreme singular values of inhomogeneous sparse random rectangular matrices}

\author{Ioana Dumitriu}

\address{Department of Mathematics, University of California San Diego, La Jolla, CA 92093}

\email{idumitriu@ucsd.edu}

\author{Yizhe Zhu}
\address{Department of Mathematics, University of California Irvine, Irvine, CA 92697}
\email{yizhe.zhu@uci.edu}

%
\date{\today}

\begin{abstract}
We develop a unified approach to bounding the largest and smallest singular values of an inhomogeneous random rectangular matrix, based on the non-backtracking operator and the Ihara-Bass formula for general random Hermitian matrices with a bipartite block structure.  We obtain probabilistic upper (respectively, lower) bounds for the largest (respectively, smallest) singular values of a large rectangular random matrix $X$. These bounds are given in terms of the maximal and minimal $\ell_2$-norms of the rows and columns of the variance profile of $X$. The proofs involve finding probabilistic upper bounds on the spectral radius of an associated non-backtracking matrix $B$.  The two-sided bounds can be applied to the centered adjacency matrix of sparse inhomogeneous Erd\H{o}s-R\'{e}nyi bipartite graphs for a wide range of sparsity, down to criticality. In particular, for Erd\H{o}s-R\'{e}nyi bipartite graphs $\mathcal G(n,m,p)$ with $p=\omega(\log n)/n$,    and $m/n\to y \in (0,1)$, our sharp bounds imply that there are no outliers outside the support of the Mar\v{c}enko-Pastur law almost surely. This result extends the Bai-Yin theorem to sparse rectangular random matrices.
\end{abstract}

\maketitle

\section{Introduction}

\subsection{Extreme singular values of random matrices}
The asymptotic and non-asymptotic behavior of extreme singular values of random matrices is a fundamental topic in random matrix theory \cite{rudelson2010non,vershynin2010introduction}. They are crucial quantities used to provide theoretical guarantees for randomized linear algebra algorithms on large data sets, with applications in machine learning, signal processing, and data science.

Consider an $n\times m$ random matrix $X$ with $m/n\to y\in (0,1)$ with i.i.d. entries. Let $\sigma_{\max}(X)$ and $\sigma_{\min}(X)$ be the largest and smallest singular value of $X$, respectively. The classical Bai-Yin theorem \cite{bai1993limit} says that, under the finite fourth-moment assumption of the distribution of entries,  almost surely, \begin{align}\label{eq:extreme}
\frac{1}{\sqrt n} \sigma_{\max}(X)\to 1+\sqrt{y},\quad \frac{1}{\sqrt n} \sigma_{\min}(X)\to 1-\sqrt{y}.
\end{align}
This implies that there are no outliers outside the support of the Mar\v{c}enko-Pastur law for $\frac{1}{n}X^* X$.  A non-asymptotic version of the Bai-Yin theorem with a sharp constant for Gaussian matrices can be obtained from Gordon's inequality \cite{gordon1985some,vershynin2010introduction,han2022exact}; beyond the Gaussian case, similar results were given in \cite{feldheim2010universality} using the moment method for symmetric sub-Gaussian distributions (in addition, Tracy-Widom fluctuations were also proved). Under the more relaxed, finite second moment assumption, the convergence of the smallest singular value to the left edge of the Mar\v{c}enko-Pastur law was proved in \cite{tikhomirov2015limit}. Later, the convergence of the smallest and largest singular values to the edge of the spectrum was proved in \cite{chafai2018convergence,heiny2018almost}, for more general models. Finally, for sparse,  heavy-tailed random matrices, the convergence of the largest singular value was considered in \cite{auffinger2016extreme}.

Besides the sharp asymptotic behavior for the extreme singular values, non-asymptotic bounds (which do not capture the sharp constants, but the correct order) for $\sigma_{\max}(X)$ and $\sigma_{\min}(X)$ were constructed by using other arguments, including $\varepsilon$-nets
\cite{vershynin2010introduction}, matrix deviation inequalities  \cite{vershynin_2018}, and the variational principle \cite{zhivotovskiy2021dimension}. Largest singular values can also be bounded using the moment method \cite{latala2018dimension} or by using the spectral norm bound for Hermitian matrices of size $(n+m)\times (n+m)$ \cite{benaych2020spectral,bandeira2016sharp}. 

Similar results for the smallest singular values of rectangular random matrices are harder to obtain, especially when the matrices are sparse, partly because there are fewer methods of approach.  A lower bound without forth-moment assumptions was given by \cite{tikhomirov2016smallest,koltchinskii2015bounding}, and for heavy-tailed distributions in \cite{guedon2017interval,guedon2020random,tikhomirov2018sample};  None of the results above capture the sharp constant in \eqref{eq:extreme}. \cite{litvak2012smallest} considered the smallest singular values for random matrices with a prescribed pattern of zeros (which does not cover sparse Bernoulli random matrices); and \cite{gotze2022largest} considered sparse Bernoulli random matrices with $p=\omega(\log^4 n/n)$. Very recently, \cite{brailovskaya2022universality} provided a very general, non-asymptotic universality principle on the spectrum of inhomogeneous random matrices that captures the sharp constant for extreme singular values in a general setting, including inhomogeneous and sparse random rectangular matrices when $p=\omega(\log^4 n/n)$. We compare their results with ours in Remarks \ref{rmk:comparison_lowerbound}.

We should also note that the smallest singular value of a square random matrix behaves differently from the rectangular one in  \eqref{eq:extreme}, and our lower bounds on the smallest singular value (Theorems \ref{thm:expectation_A_lowerbound} and \ref{thm:least}) do not cover the square case. Specifically, in the square case, with high probability,  $\sigma_{\min}(X)$ is of order $1/\sqrt{n}$. A unified bound in both square and rectangular cases can be found in \cite{rudelson2009smallest}, which gives a lower bound $\Omega(\sqrt{n}-\sqrt{m-1})$. 
For the square random matrices,  the smallest singular value bounds were proved in \cite{livshyts2021smallest,livshyts2021smallest2,cook2018lower} for inhomogeneous random matrices, and in \cite{basak2017invertibility,basak2021sharp,che2019universality} for sparse random matrices. 

Another related topic is the study of the concentration of spectral norm for inhomogeneous random matrices, including sparse random matrices \cite{le2017concentration,benaych2020spectral,benaych2019largest,alt2021extremal,tikhomirov2021outliers}, Gaussian matrices with independent entries \cite{bandeira2016sharp,van2017spectral,latala2018dimension}, Wishart-type matrices \cite{cai2022non}, general random matrices \cite{tropp2015introduction,bandeira2021matrix,brailovskaya2022universality}, and  non-backtracking matrices \cite{benaych2020spectral,stephan2020non,bordenave2020detection}.

\subsection{Sparse random  bipartite graphs}
Extreme singular values of sparse random bipartite graphs are important quantities in the study of community detection \cite{florescu2016spectral,zhou2019analysis,wan2015class}, coding theory \cite{janwa2003tanner}, matrix completion \cite{brito2022spectral,bhojanapalli2014universal}, numerical linear algebra \cite{avron2019spectral}, and theoretical computer science \cite{deshpande2019threshold,guruswami2022l_p}.  However, classical estimates for sub-Gaussian random matrices \cite{vershynin2010introduction} cannot be directly applied to sparse random matrices due to the lack of concentration.

Recently, the non-backtracking operator has proved to be a powerful tool in the study of spectra of sparse random graphs, specifically, when the average degree of the random graph is bounded \cite{bordenave2018nonbacktracking,bordenave2020detection, bordenave2020new,  brito2022spectral,stephan2022sparse,dumitriu2021spectra} or slowly growing  \cite{benaych2020spectral,coste2021eigenvalues,wang2017limiting, stephan2020non,alt2021extremal}.  
Most results obtained with the help of non-backtracking operators are concerned with the largest eigenvalues and spectral gaps, with the exception of \cite{brito2022spectral}, which gives a lower bound on the smallest singular value of a random biregular bipartite graph, and \cite{coste2021eigenvalues}, which gives the location of isolated real eigenvalues inside the bulk of the spectrum for the non-backtracking operator. 
Lower bounds on smallest singular values for sparse random rectangular matrices were also considered in \cite{zhu2022second,guruswami2022l_p,gotze2022largest} by other methods for various models.

\subsection{Contributions of this paper}
In this paper, we provide new non-asymptotic bounds on the extreme singular values of inhomogeneous sparse rectangular matrices.  Our main tool is the non-backtracking operator for a general $n\times n$ matrix defined as follows.
\begin{definition}[Non-backtracking operator]
Let $H\in M_n(\mathbb C)$. For $e=(i,j), f=(k,l)$,
define the non-backtracking operator of $H$ as an $n^2\times n^2$ matrix $B$ such that
\begin{align}\label{eq:defB}
    B_{ef}:= H_{kl}\mathbf{1}_{j=k}\mathbf{1}_{i\not=l}.
\end{align}
\end{definition}
To associate a non-backtracking operator with a rectangular $n\times m$ random matrix $X$, we  work with the non-backtracking operator $B$ of an $(n+m)\times (n+m)$  matrix 
\begin{align}\label{eq:defH}
H=\begin{bmatrix}
 0 & X \\
 X^*  &0
\end{bmatrix}.
\end{align}
We can summarize the major steps in our proofs as follows:
\begin{enumerate}
    \item We improve the deterministic bound given in \cite{benaych2020spectral} on the largest singular value of rectangular matrices,  in terms of the spectral radius $\rho(B)$ of $B$;
    \item We provide a new deterministic  lower bound on the smallest singular value in terms of $\rho(B)$;
    \item We give an improved probabilistic bound on $\rho(B)$ for inhomogeneous random rectangular matrices;
    \item Combining the deterministic and probabilistic results, we prove, in a unified way, two-sided probabilistic bounds for a general inhomogeneous rectangular random matrix model; we also specialize them for inhomogeneous sparse random matrices for a wide range of sparsity. Our main results are stated in Section \ref{sec:main_results}.
\end{enumerate}

Although some partial efforts were previously made in the literature (\cite{auffinger2016extreme,cai2022non,gotze2022largest}) and some of our tools have been developed in
 \cite{benaych2020spectral}, the crucial contribution of this paper is a deep and unified understanding of the relationship between the singular values of a rectangular matrix and the eigenvalues of its associated non-backtracking operator, which allows us to give a unified treatment of extreme singular values. For the first time and with a minimal set of conditions, this, in turn, allows us to extend the Bai-Yin theorem to sparse random bipartite graphs (Corollary \ref{cor:no_outliers}) with average degree $\omega(\log n)$. For inhomogeneous sparse random bipartite graphs, the smallest singular value bound explicitly depends on the maximal and minimal expected degrees without extra constant factors. The upper bound on the largest singular value is valid when the maximal average degree $d=\Omega(1)$, and the lower bound on the smallest singular value is valid down to the critical regime $d=\Omega(\log n)$.

Our proof relies on the connection between extreme singular values of $X$ and the spectral radius of the corresponding non-backtracking operator $B$. In the biregular bipartite graph case \cite{brito2022spectral},  this relation is described by the Ihara-Bass formula \cite{bass1992ihara}, which results in algebraic equations involving the spectrum of $X$ and $B$. 
For inhomogeneous  Erd\H{o}s-R\'{e}nyi bipartite graphs, exact algebraic equations no longer work. Instead, we find deterministic inequalities between the extreme singular values of $X$ and the spectral radius of $B$, using a block version of the generalized Ihara-Bass formula given in  Lemma \ref{lem:block_Ihara}.

The proof of the spectral norm bound for Hermitian random matrices in \cite{benaych2020spectral}  relies on the relation between the largest eigenvalue of a  Hermitian matrix and the largest real eigenvalue of its associated non-backtracking matrix. In our case,  to get the lower bound on $\sigma_{\min}(X)$, we connected the small singular values of $X$  to the largest purely imaginary eigenvalue (in modulus) of $B$ (see Lemma \ref{lem:lowerbound}). This idea helped us establish here a similar phenomenon in a general inhomogeneous setting beyond the random bipartite biregular graph case studied in \cite{brito2022spectral}.

 {Based on \cite{benaych2020spectral},  a more refined phase transition behavior of extreme eigenvalues for homogeneous Erd\H{o}s-R\'{e}nyi graphs at $d=\log n/(\log 4-1)$  was shown in \cite{alt2021extremal}, and the same threshold was also obtained in \cite{tikhomirov2021outliers} with a different method. It is possible to combine the techniques in \cite{alt2021extremal} with our Theorems \ref{thm:main_sparse_max} and \ref{thm:expectation_A_lowerbound} to study the phase transition behavior for $d=c\log n$ in homogeneous Erd\H{o}s-R\'{e}nyi bipartite graphs, and we intend to consider this problem in subsequent work.}
 
 \subsubsection*{Linear Algebra Notation}
 We say $X\preceq Y$ for two Hermitian matrices $X$ and $Y$ if $Y-X$ is a positive semidefinite matrix. For $c\in \mathbb R$ $X\preceq c$ means $X\preceq cI$. $\|X\|$ is the spectral norm of $X$, and for a square matrix $B$, $\rho(B)$ is the spectral radius of $B$, and $\sigma(B)$ is the set of all eigenvalues of $B$.  {We denote $\sigma_{\max}(X), \sigma_{\min}(X)$ the largest and smallest singular values of a matrix $X$, respectively.}
All  $C,c, C_i, c_i$ for $i\in \mathbb N$ are universal constants. $x\vee y, x\wedge y$ are the maximum and minimum  of $x$ and $y$, respectively. Denote   $(x)_+= x$ if $x\geq 0$ and $(x)_+=0$ for $x<0$. 

\subsection*{Organization of the paper}
The rest of the paper is structured as follows. In Section \ref{sec:main_results}, we state our main results for sparse inhomogeneous Erd\H{o}s-R\'{e}nyi bipartite graphs and general rectangular random matrices. In Section \ref{sec:Ihara}, we connect the spectra of $H$ defined in \eqref{eq:defH} and that of its non-backtracking operator $B$ and prove some deterministic bounds on the extreme singular values of $X$. In Sections \ref{sec:bound_rho_B}  and \ref{sec:trace}, we give a probabilistic upper bound on $\rho(B)$ for a general random matrix  $X$ with the moment method. In Sections \ref{sec:prob_largest} and \ref{sec:prob_smallest}, we give proofs for the probabilistic bounds on extreme singular values of $X$ and specialize them for inhomogeneous sparse bipartite random graphs.

\section{Main results}\label{sec:main_results}


\subsection{Inhomogeneous  Erd\H{o}s-R\'{e}nyi  bipartite graphs}\label{sec:sparse}

\begin{definition}[Inhomogeneous  Erd\H{o}s-R\'{e}nyi bipartite graph]
An \textit{inhomogeneous Erd\H{o}s-R\'{e}nyi bipartite graph} $G\sim \mathcal G(n,m,p_{ij})$ is a random bipartite graph defined on a vertex set $V=V_1\cup V_2$, where $|V_1|=n, |V_2|=m$ such that an edge $ij$, $i\in [n], j\in [m]$ is included independently with probability $p_{ij}$.
Let $A\in \{0,1\}^{n\times m}$ be the \textit{biadjacency matrix} of $G$ such that $A_{ij}=1$ if $ij$ is an edge in $G$ and $A_{ij}=0$ otherwise.  The \textit{adjacency matrix} of $G$ is given by $\begin{bmatrix}
 0 & A\\
 A^\top & 0
\end{bmatrix}.$
\end{definition}

\subsubsection*{Notation and assumptions}

 {In Section \ref{sec:sparse}, we will use the following notation and assumptions and specify occasional, limited-use additional assumptions as necessary.
\begin{itemize}
\item The maximal expected degree among all vertices from $V_1\cup V_2$ is denoted by
    \[d:= \max_{i\in [n],j\in [m]} \left(  \sum_{k=1}^m p_{ik}, \sum_{k=1}^n p_{kj}\right)\] 
\item  The normalized maximal expected degrees from $V_2$ (respectively, $V_1$) as
\begin{align}
    \rho_{\max}:&=\frac{1}{d}\max_{ {i\in [n]}} \sum_{ {j\in [m]}}p_{ij}, \quad 
    \tilde{\rho}_{\max}:=\frac{1}{d}\max_{j\in [m]}\sum_{i\in [n]}p_{ij},
\end{align}
and the normalized minimal expected degree in $V_2$ is defined as 
\[ \tilde{\rho}_{\min}= {\frac{1}{d}}\min_{j\in [m]}\sum_{i\in [n]} p_{ij}(1-p_{ij}).\] 
\item Denote $N:=n\vee m$  and  {$\eta:=\sqrt{\log N/d}$}. 
\end{itemize}
\begin{assumption}\label{assumption_1}
We let $\gamma :=\rho_{\max}\wedge \tilde \rho_{\max}$, where $\gamma\in (0,1]$ and $\gamma=\Omega(1)$. Note that when all $p_{ij}=p$, $ {\gamma=\frac{n\wedge m}{n \vee m}}$ is the aspect ratio. Assume  $d\geq \gamma^{-1/2}$ and $d^{\frac{7}{2}}\max_{ij}p_{ij}\leq N^{-\frac{1}{10}}$.
\end{assumption}
}


We first state the following upper bound on the largest singular values of $A-\mathbb EA$ for a wide range of sparsity down to $d=\Omega(1)$.
 {\begin{theorem}[Largest singular value for inhomogeneous sparse random matrices]\label{thm:main_sparse_max}
Let $A$ be the biadjacency matrix of an inhomogeneous random graph. Under Assumption \ref{assumption_1},
\begin{align}\label{eq:expectation_A}
    \frac{1}{\sqrt d}\mathbb E [\sigma_{\max}(A-\mathbb E A)]\leq 1+\sqrt{\gamma} +O\left({\frac{\eta}{1\vee \sqrt{\log \eta }}}\right).
\end{align}
Moreover,  with probability at least $1-O(N^{-3}),$  we have
\begin{align}\label{eq:concentration_A}
   \frac{1}{\sqrt{d}}\sigma_{\max}(A-\mathbb E A)\leq 1+\sqrt{\gamma} +O\left(\frac{\eta}{1\vee \sqrt{\log \eta }}\right).
\end{align}
\end{theorem}}

\begin{remark} \label{rmk:Knowles}
 {
Below, we qualitatively explain Theorem \ref{thm:main_sparse_max} in the various $d$ regimes. }
   When $d=\omega(\log N)$ and $d\leq N^a$ for some constant $a>0$, \eqref{eq:expectation_A} is dominated by the first term, which gives $1+\sqrt{\gamma}+o(1)$. This is sharp, and it recovers the results of \cite[Theorem 4.9 and Example 4.10]{latala2018dimension},  {and \cite[Theorem 3.5]{cai2022non}}. Moreover,
our model can be seen as a specific case considered in \cite{benaych2019largest}, but their bound yields the weaker $2+o(1)$ on the right-hand side of \eqref{eq:expectation_A}.  The fact that we cannot cover denser regimes is an artifact of our proof method (see Theorem \ref{thm:rhoB}).

 When $d = O(\log N)$, Theorem \ref{thm:main_sparse_max} is optimal up to a constant factor. 
When $d=o(\log N)$, the second term in \eqref{eq:expectation_A} is dominating. Our results yield the sharp bound $O ({\eta/\sqrt{\log \eta}} )$. This is tight up to a constant factor, down to $d\geq \gamma^{-\frac{1}{2}}$, and it agrees with the results in  \cite{benaych2019largest,krivelevich2003largest}  for non-bipartite graphs.  {Note that the results in \cite{latala2018dimension} and \cite{cai2022non} both imply an $O(\eta)$ upper bound, which is strictly weaker.}
\end{remark}

The real novelty arises in the following theorem, which provides the smallest singular value bound on $A-\mathbb EA$, down to $d=\Omega(\log n)$,  and it is sharp in certain cases.  Our sparsity assumption on $d$ is optimal up to a constant factor: when $d< (1-\varepsilon)\log n$, with high probability there are isolated vertices, and that implies $\sigma_{\min}(A-\mathbb EA)=0$.

For the next result, we need the following additional assumption.

 {
\begin{assumption}\label{assumption_2}
    Let $n\geq  m$ and $\delta\in (0,1)$. Assume $\tilde{\rho}_{\max}\geq \rho_{\max} $,  $\tilde{\rho}_{\min}> \sqrt{\gamma}$, and $\min\{ 1-\sqrt{\gamma}, \tilde{\rho}_{\min}- \sqrt{\gamma}\}=\Omega(1)$, $d^{\frac{7}{2}}\max_{ij} p_{ij}\leq  n^{-\frac{1}{10}}$, and there exists an absolute constant $C>0$ such that
\[d\geq C \max \left\{ \delta^{-1/2}, \delta^{-2} \gamma^{-1} \log n \right\}. \] 
\end{assumption}}

 {\begin{theorem}[Smallest singular value for inhomogeneous sparse random matrices] \label{thm:expectation_A_lowerbound} Under Assumptions \ref{assumption_1} and \ref{assumption_2},  
\begin{align} \label{eq:sminE}
\frac{1}{\sqrt d}\mathbb E \sigma_{\min} (A-\mathbb EA)\geq \sqrt{(1-\sqrt{\gamma})(\tilde \rho_{\min}-\sqrt{\gamma})}-O(\delta^{1/4}).
\end{align}
Moreover, with probability at least $1-O(n^{-3})$,
\begin{align}\label{eq:smin}
 \frac{1}{\sqrt{d} }\sigma_{\min}(A-\mathbb EA )&\geq \sqrt{(1-\sqrt{\gamma})(\tilde \rho_{\min}-\sqrt{\gamma})}-O(\delta^{1/4}).
\end{align}
\end{theorem}}


\begin{remark}\label{rmk:comparison_lowerbound}
The very general universality principle proved in \cite[Theorem 2.13]{brailovskaya2022universality} 
implies 
\begin{align}\label{eq:vanHandel}
d^{-1/2}\sigma_{\min}(A-\mathbb EA)\geq \sqrt{\tilde \rho_{\min}}-\sqrt{\gamma}-O(d^{-1/6} \log^{2/3}(n))
\end{align} with high probability. The leading constant  $\sqrt{\tilde \rho_{\min}}-\sqrt{\gamma}$ in \eqref{eq:vanHandel} is strictly better than our leading constant $\sqrt{(1-\sqrt{\gamma})(\tilde{\rho}_{\min}-\sqrt{\gamma})}$ in  \eqref{eq:smin} for inhomogeneous random graphs and it matches our leading constant in the homogeneous case when  $\tilde{\rho}_{\min}=1$. 
However, their results are only valid for the regime when $d = \omega(\log^4 n)$---the benefit of our method is that our results extend down to $d=\Omega(\log n)$.

 {\cite{cai2022non} studied concentration inequalities for inhomogeneous Wishart-type matrices.  When $d=\omega(\log n)$, by triangle inequality, \cite[Theorem 3.5]{cai2022non}  implies
$d^{-1}\mathbb E[\sigma_{\min}^2(A-\mathbb EA)]\geq \tilde{\rho}_{\min}-2\sqrt{\gamma}-\gamma-O(\sqrt{\eta}).$
Their leading constant is strictly weaker than ours in all regimes.}
In \cite{gotze2022largest}, the authors considered the smallest singular value of a sparse random matrix $X=A\circ Y$, where $A$ is a sparse Bernoulli matrix, and $Y$ is a Wigner matrix whose entries have bounded support, and $\circ$ is the Hadamard product. This covers the homogeneous Erd\H{o}s-R\'{e}nyi case, but not the inhomogeneous one. The authors showed that the smallest singular value is   $\Omega(\sqrt{np})$ when $p=\omega(\log^4 n)$ (see \cite[Theorem 1.2]{gotze2022largest}). Note that our Theorem \ref{thm:least} covers a more general inhomogeneous model, which includes  \cite{gotze2022largest}.
\end{remark}

\begin{remark}
     {The lower bound in  \eqref{eq:smin} is $\sqrt{(1-\sqrt\gamma)(\tilde{\rho}_{\min}-\sqrt{\gamma})}-o(1)$ when $d=\omega(\log n)$. When $d = \Omega(\log n)$, we can obtain an $\Omega(1)$ lower bound when $\delta$ is sufficiently small. We have assumed $\gamma=\Omega(1)$ for simplicity throughout. However, Theorems \ref{thm:main_sparse_max} and \ref{thm:expectation_A_lowerbound} also work for $\gamma=o(1)$, see Sections \ref{sec:prob_largest} and \ref{sec:prob_smallest} with weaker bounds. It remains an open question to find the optimal dependence on $\gamma$ when $\gamma=o(1)$ in  \eqref{eq:concentration_A} and \eqref{eq:smin}.}
\end{remark}

For the next result, we assume $d=\omega(\log n)$, and the expected degree of each vertex concentrates. The Mar\v{c}enko-Pastur law for the matrix $\frac{1}{d} (A-\mathbb EA)^\top (A-\mathbb EA)$ can be proved in the same way the semicircle law was proved in \cite[Corollary 4.3]{zhu2020graphon} via graphon theory. This implies the upper bound and lower bound given by  {\eqref{eq:concentration_A}} and  {\eqref{eq:smin}} are tight. 
We state the generalization of the  Bai-Yin theorem   to sparse random bipartite graphs in the following corollary.

\begin{cor}[Bai-Yin theorem for supercritical random  bipartite graphs] \label{cor:no_outliers} Let $A$ be the biadjacency matrix of an inhomogeneous Erd\H{o}s-R\'{e}nyi  bipartite graph sampled from $\mathcal G(n,m,p_{ij})$.
Assume $\frac{m}{n}\to y \in (0,1)$ as $n\to\infty$ and   the parameters $\{p_{ij}\}$ satisfy
\begin{align}\label{eq:homo_assumption}
\max_{i\in [n]}\left|\frac{1}{d}\sum_{j\in [m]} p_{ij}-1 \right|=o(1),  \quad 
\max_{j\in [m]}\left|\frac{1}{d}\sum_{i\in [n]} p_{ij}-y \right|=o(1).
\end{align} 
Assume $d=\omega(\log n), d\leq n^{1/5}$ and $\max_{ij} p_{ij}=O(d/n)$.
Then, almost surely,  
\begin{align}
\lim_{n\to\infty} \frac{1}{\sqrt{d}}\sigma_{\min}(A-\mathbb E A )= 1-\sqrt{y}, \quad \quad 
\lim_{n\to\infty} \frac{1}{\sqrt{d}} \sigma_{\max}(A-\mathbb E A)= 1+\sqrt{y}.\label{eq:upper_Bai}
\end{align}
\end{cor}

Corollary \ref{cor:no_outliers} covers the homogeneous Erd\H{o}s-R\'{e}nyi  bipartite graph $\mathcal G(n,m,p)$ with $p=\omega(\log n/n)$ and $p\leq  n^{-4/5}$. 
For denser cases when $p\geq n^{-4/5}$, the right edge limit \eqref{eq:upper_Bai} has been obtained in \cite{latala2018dimension}, and the left edge limit in \eqref{eq:upper_Bai} may be obtainable by moment methods without using the non-backtracking operator \cite{bai1993limit,feldheim2010universality}.

\subsection{Inhomogeneous random rectangular matrices}\label{sec:general}
To prove our results in Section \ref{sec:sparse}, we work with a more general matrix model whose entries have bounded support. Theorem \ref{thm:main_sparse_max} and Theorem \ref{thm:expectation_A_lowerbound} are obtained from Theorem \ref{thm:lambdamax} and Theorem \ref{thm:least} by specifying model parameters and taking $q=\sqrt{d}$.

 {\subsubsection*{Notation and assumptions}
In Section~\ref{sec:general}, we use the following notation and assumptions:
\begin{itemize}
\item The minimal column sum of variances is denoted by
\[\tilde{\rho}_{\min}:=\min_{j\in [m]}\sum_{i\in [n]} \mathbb E |X_{ij}|^2. \]
\item Denote $N:=n\vee m$ and $\eta:=\sqrt{\log N}/{q}$.
\end{itemize}
\begin{assumption}\label{assumption_3}
    We assume there exist   $q>0, \kappa \geq 1$, {$\rho_{\max}, \tilde{\rho}_{\max}$} such that  \begin{align}\label{eq:Psigma}
    \max_{ij} |X_{ij}| &\leq \frac{1}{q},\quad \max_{ij}\E |X_{ij}|^2 \leq \frac{\kappa}{N},\\
      \max_{{j}\in [m]} \sum_{{i}\in [n]} \mathbb E |X_{ij}|^2 &\leq  \rho_{\max}, \quad 
    \max_{i\in [n]}  \sum_{j\in [m]} \mathbb E  |X_{ij}|^2 \leq  \tilde{\rho}_{\max},\label{eq:Psigma2}
\end{align}
where $\rho_{\max}\vee \tilde{\rho}_{\max}=1$ and $
\rho_{\max}\wedge \tilde{\rho}_{\max}=\gamma\in (0,1]$, and $\gamma=\Omega(1)$. Assume  \[
    \gamma^{-\frac{1}{4}}\leq q\leq N^{\frac{1}{10}}\kappa^{-\frac{1}{9}}\gamma^{-\frac{1}{18}} .\]
\end{assumption}}

 {\begin{theorem}[Largest singular value]\label{thm:lambdamax}
Let $X$ be an $n\times m$ random matrix with independent entries, and $\mathbb EX=0$.  Then under Assumption~\ref{assumption_3},
\begin{align}\label{eq:Esigma_max}
    \mathbb E[ \sigma_{\max}(X)]\leq \sqrt{\gamma}+1+ O\left(\frac{\eta}{{1\vee \sqrt{\log \eta}} }\right).
\end{align}
 Moreover, with probability at least $1-O(N^{-3})$,
\begin{align}\label{eq:sigmax_X}
    \sigma_{\max}(X)\leq  \sqrt{\gamma} +1+O\left(\frac{\eta}{1\vee \sqrt{\log \eta }}\right).  
\end{align} 
\end{theorem}}

 {\begin{assumption}\label{assumption_4}
    Let $n\geq  m$ and $\delta\in (0,1)$. Assume $\tilde{\rho}_{\max}\geq \rho_{\max} $,  $\tilde{\rho}_{\min}> \sqrt{\gamma}$, and $\min\{ 1-\sqrt{\gamma}, \tilde{\rho}_{\min}- \sqrt{\gamma}\}=\Omega(1)$.
Assume for  $\delta\in [0,1)$, $q$ satisfies
\[ C\max \left\{ \delta^{-1/2}, \delta^{-1} \gamma^{-1/2}\sqrt{\log n} \right\}\leq q\leq  n^{\frac{1}{10}}\kappa^{-\frac{1}{9}}\gamma^{-\frac{1}{18}}\]
for an absolute constant $C>0$.  
\end{assumption}}

 {
\begin{theorem}[Smallest singular value] \label{thm:least}
Let $X$ be an $n\times m$ random matrix with independent entries and $\mathbb E X=0$. Under Assumptions~\ref{assumption_3} and \ref{assumption_4}, we have
\begin{align}\label{eq:Esigma_min}
    \mathbb E[\sigma_{\min}(X)]\geq \sqrt{(1-\sqrt{\gamma})(\tilde{\rho}_{\min}-\sqrt{\gamma})}-O(\delta^{1/4}) .
\end{align}
Moreover, with probability at least $1-O(n^{-3})$, we have 
\begin{align}\label{eq:sigma_min}
 \sigma_{\min}(X)&\geq \sqrt{(1-\sqrt{\gamma})(\tilde{\rho}_{\min}-\sqrt{\gamma})}-O(\delta^{1/4}).
\end{align}
\end{theorem}}

 \begin{remark}
 Through standard truncation arguments, described in detail, e.g., in \cite[Pages 41-73]{borodin2019random} and \cite[Chapter 5]{bai2010spectral},  our results in Section \ref{sec:general} can be applied to random variables with unbounded support. This includes, for example, the dense Gaussian case and any other cases satisfying certain Lindeberg's conditions \cite{bai2010spectral}.
 \end{remark}

\section{Spectral relation between $X$ and $B$}\label{sec:Ihara}

\subsection{Generalized Ihara-Bass formula}\label{sec:Genarlized_Ihara}

We will make use of the following generalized Ihara-Bass formula proved in \cite{benaych2020spectral,watanabe2009graph}. When $H$ is the adjacency matrix of a graph, Lemma \ref{lem:IBformula} reduces to the classical Ihara-Bass formula in \cite{bass1992ihara,kotani2000zeta}.
\begin{lemma}[Lemma 4.1 in \cite{benaych2020spectral}]\label{lem:IBformula}
Let $H\in M_n(\mathbb C)$ with associated non-backtracking matrix $B$. Let $\lambda\in \mathbb C$ satisfying $\lambda^2\not=H_{ij}H_{ji}$ for all $i,j\in [n]$. Define $H(\lambda)$ and $M(\lambda)=\textnormal{diag}(m_i(\lambda))_{i\in [n]}$ as
\begin{align}
    H_{ij}(\lambda)&:=\frac{\lambda H_{ij}}{\lambda^2-H_{ij}H_{ji}},\quad 
    m_i(\lambda):=1+\sum_{k\in [n]}\frac{H_{ik}H_{ki}}{\lambda^2-H_{ik}H_{ki}}. \label{eq:defmi}
\end{align}
Then $\lambda\in \sigma(B)$ if and only if $\det (M(\lambda)-H(\lambda))=0.$
\end{lemma}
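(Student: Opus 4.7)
The plan is to realize $B$ as a product of operators between the vertex space $\mathbb{C}^n$ and the edge space $\mathbb{C}^{n^2}$, and then collapse the $n^2$-dimensional characteristic determinant $\det(\lambda I_{n^2}-B)$ onto an $n\times n$ determinant via Sylvester's identity $\det(I-UV)=\det(I-VU)$. The target $n\times n$ object will turn out to be $M(\lambda)-H(\lambda)$ up to a nonvanishing scalar factor, and the hypothesis $\lambda^2\ne H_{ij}H_{ji}$ will ensure that this scalar factor does not vanish.

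Concretely, I would introduce the incidence maps $S,T\in\mathbb{C}^{n\times n^2}$ defined by $S_{i,(jk)}=\delta_{ij}$ and $T_{i,(jk)}=\delta_{ik}$, the diagonal weight operator $\Sigma_H$ on $\mathbb{C}^{n^2}$ with $(\Sigma_H)_{(ij),(ij)}=H_{ij}$, and the edge-reversal involution $J$ with $J\delta_{(ij)}=\delta_{(ji)}$. Splitting the backtracking constraint as $\mathbf{1}_{j=k,\,i\ne l}=\mathbf{1}_{j=k}-\mathbf{1}_{(k,l)=(j,i)}$ immediately gives $B=T^*S\,\Sigma_H-J\Sigma_H$, and hence $\lambda I_{n^2}-B=(\lambda I+J\Sigma_H)-T^*S\,\Sigma_H$. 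I would then factor out $\lambda I+J\Sigma_H$ on the left, noting that it is block-diagonal across the pairs $\{e,\bar e\}$: on $\{(i,j),(j,i)\}$ with $i\ne j$ it is the $2\times 2$ block with diagonal $\lambda$ and off-diagonal entries $H_{ij},H_{ji}$, of determinant $\lambda^2-H_{ij}H_{ji}$, while on a self-loop $(i,i)$ it is the scalar $\lambda+H_{ii}$. The hypothesis makes every such block nonsingular, with total determinant a nonzero product of $\lambda^2-H_{ij}H_{ji}$ over pairs.

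Applying Sylvester's identity to $U=(\lambda I+J\Sigma_H)^{-1}T^*$ (size $n^2\times n$) and $V=S\Sigma_H$ (size $n\times n^2$) then reduces the big determinant to
\[
\det(\lambda I-B)\;=\;\det(\lambda I+J\Sigma_H)\cdot\det\bigl(I_n-S\Sigma_H(\lambda I+J\Sigma_H)^{-1}T^*\bigr).
\]
To identify the $n\times n$ matrix on the right, I would read it off pair by pair from the explicit $2\times 2$ block inverses: the diagonal $\lambda$ entries of the inverse produce the off-diagonal contribution $\sum_j H_{ij}(\lambda)v_j$, while the antidiagonal $-H$ entries, combined with $\Sigma_H$ and restricted to the diagonal $i=j$, produce exactly $-\sum_k H_{ik}H_{ki}/(\lambda^2-H_{ik}H_{ki})=1-m_i(\lambda)$; a short calculation absorbs the self-loop case consistently. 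Hence $S\Sigma_H(\lambda I+J\Sigma_H)^{-1}T^*=H(\lambda)+I-M(\lambda)$, so $I_n-S\Sigma_H(\lambda I+J\Sigma_H)^{-1}T^*=M(\lambda)-H(\lambda)$, and combined with the nonvanishing of $\det(\lambda I+J\Sigma_H)$ one obtains $\lambda\in\sigma(B)\iff\det(M(\lambda)-H(\lambda))=0$. The main obstacle is the careful edge bookkeeping in this last step: keeping $e$ versus $\bar e$ straight inside the $2\times 2$ blocks, correctly separating the $k\ne i$ contributions (which assemble into $H_{ij}(\lambda)$) from the $k=i$ contribution (which assembles into $1-m_i(\lambda)$) in the vertex-indexed sum, and treating self-loops as a degenerate pair so that the $k=i$ term of $m_i(\lambda)$ is recovered without double-counting.
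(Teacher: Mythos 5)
The paper does not prove this lemma itself; it is quoted verbatim as Lemma 4.1 of the cited reference \cite{benaych2020spectral}, whose proof proceeds by essentially the same route you take (incidence operators between vertex and edge space, block inversion of $\lambda I+J\Sigma_H$ over the pairs $\{e,\bar e\}$, and the Sylvester/Weinstein--Aronszajn identity). Your argument is correct and complete: the decomposition $B=T^*S\Sigma_H-J\Sigma_H$ checks out, the hypothesis $\lambda^2\ne H_{ij}H_{ji}$ (including $i=j$, giving $\lambda\ne\pm H_{ii}$) is exactly what makes every $2\times2$ block and every self-loop scalar $\lambda+H_{ii}$ invertible, and your identification $S\Sigma_H(\lambda I+J\Sigma_H)^{-1}T^*=H(\lambda)+I-M(\lambda)$, including the self-loop bookkeeping via $\frac{\lambda H_{ii}-H_{ii}^2}{\lambda^2-H_{ii}^2}=\frac{H_{ii}}{\lambda+H_{ii}}$, is accurate.
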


By itself, Lemma \ref{lem:IBformula} is not sharp enough to yield a tight upper bound for $\sigma_{\max}(X)$, and it cannot yield any results for the smallest singular values. 
Therefore, we have developed a customized approach for the block matrix model, including a sharp analysis of the non-backtracking operator, which will lead to tight results in both cases. The first step in this approach is the following customized version of  
Lemma \ref{lem:IBformula}.

\begin{lemma}\label{lem:block_Ihara}
Let $X$ be an $n\times m$ complex matrix and $H=\begin{bmatrix}
0 & X\\
X^* &0 
\end{bmatrix}$.
Let $B$ be the non-backtracking operator associated with $H$. Define an $n\times m$ matrix $X(\lambda)$, and two diagonal matrices $M_1(\lambda)=\textnormal{diag}(m^{(1)}_i(\lambda))_{i\in [n]}$,  $M_2(\lambda)=\textnormal{diag}(m^{(2)}_i(\lambda))_{i\in [m]}$ as follows:
\begin{align*}
    X_{ij}(\lambda)=\frac{\lambda X_{ij}}{\lambda^2-|X_{ij}|^2},\quad 
    m^{(1)}_i(\lambda)=1+\sum_{k\in [m]}\frac{|X_{ik}|^2}{\lambda^2-|X_{ik}|^2}, \quad 
     m^{(2)}_j(\lambda)=1+\sum_{k\in [n]}\frac{|X_{kj}|^2}{\lambda^2-|X_{kj}|^2}.
\end{align*}
Assume $M_1(\lambda)$ is non-singular.
Then $\lambda\in \sigma (B)$ if and only if 
\begin{align}\label{eq:IB-bipartite}
    \det(M_2(\lambda)-X^*(\lambda) M_1(\lambda)^{-1} X(\lambda)) \det (M_1(\lambda))=0.
\end{align}
\end{lemma}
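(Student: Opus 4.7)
The plan is to derive Lemma \ref{lem:block_Ihara} directly from the generalized Ihara--Bass formula (Lemma \ref{lem:IBformula}) applied to the bipartite matrix $H$ of size $(n+m)\times(n+m)$, by exploiting the block structure to simplify both $M(\lambda)$ and $H(\lambda)$, and then applying the Schur complement.

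First I would index the rows and columns of $H$ by $[n]\cup\{n+1,\dots,n+m\}$, with the nonzero block $X$ occupying rows $[n]$ and columns $\{n+1,\dots,n+m\}$. Since $H_{ij}H_{ji}$ equals $|X_{ik}|^2$ when $(i,j)$ couples a row index $i\in[n]$ with a shifted column index $j=n+k$ (or the transpose), and is $0$ otherwise, the quantities $m_i(\lambda)$ defined in \eqref{eq:defmi} immediately break into two groups: for $i\in[n]$, the sum over $k$ collapses to a sum over $[m]$ of $|X_{ik}|^2/(\lambda^2-|X_{ik}|^2)$, recovering $m_i^{(1)}(\lambda)$; for $i\in\{n+1,\dots,n+m\}$, one analogously obtains $m_i^{(2)}(\lambda)$. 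Hence $M(\lambda)=\mathrm{diag}(M_1(\lambda),M_2(\lambda))$. An analogous computation for the entrywise transform gives
\begin{equation*}
H(\lambda)=\begin{bmatrix} 0 & X(\lambda) \\ X(\lambda)^* & 0 \end{bmatrix},
\end{equation*}
using the definition $X_{ij}(\lambda)=\lambda X_{ij}/(\lambda^2-|X_{ij}|^2)$ and the fact that $H_{ji}=\overline{H_{ij}}$ on the nonzero block.

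Next I would assemble
\begin{equation*}
M(\lambda)-H(\lambda)=\begin{bmatrix} M_1(\lambda) & -X(\lambda) \\ -X(\lambda)^* & M_2(\lambda)\end{bmatrix},
\end{equation*}
and apply the standard Schur complement identity under the hypothesis that $M_1(\lambda)$ is invertible, obtaining
\begin{equation*}
\det\bigl(M(\lambda)-H(\lambda)\bigr)=\det\bigl(M_1(\lambda)\bigr)\,\det\bigl(M_2(\lambda)-X(\lambda)^*M_1(\lambda)^{-1}X(\lambda)\bigr).
\end{equation*}
By Lemma \ref{lem:IBformula}, the left-hand side vanishes precisely when $\lambda\in\sigma(B)$, so this yields the claimed characterization \eqref{eq:IB-bipartite}.

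The only technicality is to verify the admissibility condition of Lemma \ref{lem:IBformula}, namely $\lambda^2\neq H_{ij}H_{ji}$ for all $i,j$. In the bipartite setting this amounts to $\lambda\neq 0$ and $\lambda^2\neq|X_{ij}|^2$ for every $(i,j)$; both are implicit wherever the expressions $m_i^{(1)}(\lambda)$, $m_j^{(2)}(\lambda)$, $X_{ij}(\lambda)$ and $M_1(\lambda)^{-1}$ appear, so the conclusion is stated for those $\lambda$ at which all terms in \eqref{eq:IB-bipartite} are well defined and $M_1(\lambda)$ is nonsingular. I do not expect any serious obstacle here: the argument is essentially bookkeeping of indices in the bipartite block decomposition, coupled with a single Schur complement step. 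The only place where care is needed is to make sure the $H(\lambda)$ entries on the lower block are identified with $X(\lambda)^*$ rather than $X(\lambda)^\top$, which follows from the Hermitian relation $H_{ji}=\overline{H_{ij}}$ together with the fact that $\lambda^2-|X_{ij}|^2$ is real.
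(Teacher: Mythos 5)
Your proposal is correct and follows essentially the same route as the paper: apply Lemma \ref{lem:IBformula} to the bipartite $H$, observe that $M(\lambda)$ and $H(\lambda)$ inherit the block structure, and conclude via the block determinant (Schur complement) formula using the invertibility of $M_1(\lambda)$. Your additional bookkeeping of indices and the admissibility condition is just a more detailed version of the paper's two-line argument.
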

\begin{proof}
From Lemma \ref{lem:IBformula}, $\lambda\in \sigma(B)$ if and only if 
$ 
    \det\begin{bmatrix}
     M_1(\lambda) & -X(\lambda)\\
     -X^*(\lambda)  &M_2(\lambda)
    \end{bmatrix}=0. \notag 
$
Since $M_1(\lambda)$ is non-singular, by the determinant formula for block matrices, \eqref{eq:IB-bipartite} holds.
\end{proof}

\subsection{Deterministic upper bound on the largest singular value}

Using Lemma \ref{lem:IBformula}, we bound $\sigma_{\max}(X)$ in terms of the maximal Euclidean norm of rows and columns of $X$ and  $\rho(B)$ as follows.

\begin{lemma}\label{lem:upperbound}
Let $H,X$, and $B$ be defined as in Lemma \ref{lem:block_Ihara}. Suppose    
\begin{align}
    \max_{ij} |X_{ij}| &\leq \delta, \quad 
    \max_{j\in [m]} \sum_{i\in [n]} |X_{ij}|^2 \leq \tilde{\rho}_{\max}(1+\delta),\label{eq:bound1}\\
    \max_{i\in [ {n}]} \sum_{j\in [ {m}]} |X_{ij}|^2 &\leq \rho_{\max}(1+\delta),\label{eq:bound2}
\end{align}
with $\rho_{\max}\vee \tilde{\rho}_{\max}=1, \rho_{\max}\wedge \tilde{\rho}_{\max}=\gamma$  and $\delta\in [0,\gamma^{\frac{1}{2}}]$.
Let  $\lambda\geq \max \{\gamma^{\frac{1}{4}}(1+\sqrt \delta), \rho(B)\}.$
Then 
\begin{align}
   \sigma_{\max}(X)^2\leq \left(\lambda +\frac{1}{ \lambda} \right)\left( \lambda +\frac{\gamma}{ \lambda} \right)+6\gamma^{-1}\delta \left(2\lambda+\frac{1+\gamma}{\lambda}\right)+36\gamma^{-2}\delta^2. \notag 
\end{align}
\end{lemma}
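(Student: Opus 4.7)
My plan is to use Lemma~\ref{lem:block_Ihara} to translate the hypothesis $\lambda \geq \rho(B)$ into a positive semidefinite inequality, and then extract the bound on $\sigma_{\max}(X)$ from it. Throughout I define
\[
T(\mu) := M_2(\mu) - X(\mu)^* M_1(\mu)^{-1} X(\mu),
\]
a continuous Hermitian-matrix-valued function of real $\mu > \delta$. The assumption $\lambda \geq \gamma^{1/4}(1+\sqrt\delta) > \delta$ (using $\delta \leq \sqrt\gamma$) places $\lambda$ in the domain where $T$ is well defined and $M_1(\lambda) \succ 0$.

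The first step is to establish $T(\lambda) \succeq 0$ by continuity. As $\mu \to \infty$, $M_1(\mu), M_2(\mu) \to I$ and $X(\mu) \to 0$, so $T(\mu) \to I \succ 0$. By Lemma~\ref{lem:block_Ihara}, $\det T(\mu) \neq 0$ for every real $\mu > \rho(B)$. Continuity of the eigenvalues of $T(\mu)$ on $(\rho(B),\infty) \cap (\delta,\infty)$, together with the absence of zero crossings and positivity at infinity, forces $T(\mu) \succ 0$ throughout this interval; passing to $\mu \downarrow \lambda$ gives $T(\lambda) \succeq 0$.

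From $T(\lambda) \succeq 0$ I extract $\|X(\lambda)\|^2 \leq \mu^{(1)}_{\max}\mu^{(2)}_{\max}$, with $\mu^{(k)}_{\max} := \max_i m^{(k)}_i(\lambda)$, by sandwiching $X(\lambda)^*X(\lambda)$ via the diagonal operator inequalities $M_1(\lambda)^{-1} \succeq (\mu^{(1)}_{\max})^{-1}I$ and $M_2(\lambda) \preceq \mu^{(2)}_{\max}I$. The row and column bounds \eqref{eq:bound1}--\eqref{eq:bound2} yield $\mu^{(1)}_{\max} \leq 1 + \tilde\rho_{\max}(1+\delta)/(\lambda^2-\delta^2)$ and $\mu^{(2)}_{\max} \leq 1 + \rho_{\max}(1+\delta)/(\lambda^2-\delta^2)$. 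To pass back to $X$, I use the identity $X = \lambda X(\lambda) - (L \circ X(\lambda))/\lambda$ with $L_{ij} = |X_{ij}|^2$, together with the Hadamard-product inequality $\|A \circ B\| \leq (\max_i \|A_{i,:}\|_2)(\max_j \|B_{:,j}\|_2)$; applying this with the estimates $\sum_j |X_{ij}|^4 \leq \delta^2\tilde\rho_{\max}(1+\delta)$ and $\sum_i |X(\lambda)_{ij}|^2 \leq \rho_{\max}(1+\delta)\lambda^2/(\lambda^2-\delta^2)^2$ gives
\[
\sigma_{\max}(X) \leq \lambda\sqrt{\mu^{(1)}_{\max}\mu^{(2)}_{\max}} + \frac{\delta(1+\delta)\sqrt\gamma}{\lambda^2-\delta^2}.
\]

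The hypothesis $\lambda^2 \geq \sqrt\gamma(1+\sqrt\delta)^2$ and $\delta \leq \sqrt\gamma$ yield $\lambda^2 - \delta^2 \geq \sqrt\gamma$. At $\delta=0$, $\lambda^2\mu^{(1)}_{\max}\mu^{(2)}_{\max}$ collapses exactly to $(\lambda+1/\lambda)(\lambda+\gamma/\lambda)$, and the residual $\delta$-contributions are organized using $1/(\lambda^2-\delta^2) \leq \gamma^{-1/2}$ together with the AM--GM observation that the claimed right-hand side factors as $(\sqrt{(\lambda+1/\lambda)(\lambda+\gamma/\lambda)} + 6\gamma^{-1}\delta)^2$ (since $\sqrt{(\lambda+1/\lambda)(\lambda+\gamma/\lambda)} \leq \lambda + (1+\gamma)/(2\lambda)$ by AM--GM); this produces the stated $6\gamma^{-1}\delta$- and $36\gamma^{-2}\delta^2$-corrections. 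The main obstacle is precisely this final bookkeeping: matching the claimed constants and the $\gamma^{-1}, \gamma^{-2}$ scaling requires careful elementary estimates, in particular a sharper-than-naive control of $\lambda\sqrt{\mu^{(1)}_{\max}\mu^{(2)}_{\max}} - \sqrt{(\lambda+1/\lambda)(\lambda+\gamma/\lambda)}$ via $\sqrt{\kappa^2+c} \leq \kappa + c/(2\kappa)$. Steps leading up to the displayed bound above are structurally clean; the remaining work is entirely an exercise in elementary calculus.
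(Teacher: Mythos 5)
Your argument is correct and reaches the stated bound, but it runs through a genuinely different pipeline than the paper's. The paper never invokes the block Schur-complement form (Lemma~\ref{lem:block_Ihara}) for the upper bound: it works directly with the full $(n+m)\times(n+m)$ inequality $M(\lambda)-H(\lambda)\succeq 0$ from Lemma~\ref{lem:IBformula}, replaces $\lambda H(\lambda)$ by $H$ via a Gershgorin estimate $\|\lambda H(\lambda)-H\|\leq 2\gamma^{-1/2}\delta$ and $\lambda m_i(\lambda)$ by $\lambda+\rho_{\max}/\lambda$ up to $4\gamma^{-1}\delta$, arrives at $\bigl[\begin{smallmatrix} d_1 I & -X\\ -X^* & d_2 I\end{smallmatrix}\bigr]\succeq 0$ with $d_1d_2$ equal to the claimed right-hand side, and concludes $X^*X\preceq d_1d_2$ by a congruence factorization. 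You instead extract $\|X(\lambda)\|^2\leq \mu^{(1)}_{\max}\mu^{(2)}_{\max}$ from the Schur complement and transfer back to $X$ via the exact identity $X=\lambda X(\lambda)-\tfrac{1}{\lambda}L\circ X(\lambda)$ and the Schur/Hadamard norm bound; your correction term $\delta(1+\delta)\sqrt{\gamma}/(\lambda^2-\delta^2)\leq 2\delta$ is in fact sharper than the paper's $2\gamma^{-1/2}\delta$. The paper's route buys a cleaner endgame (the target bound is literally the product $d_1d_2$, so no square roots appear), while yours keeps everything at the level of $X$ rather than $H$ and reuses the block lemma that the paper reserves for the smallest singular value. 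Two small points: your claim that the right-hand side ``factors as'' $\bigl(\sqrt{(\lambda+1/\lambda)(\lambda+\gamma/\lambda)}+6\gamma^{-1}\delta\bigr)^2$ should read ``dominates,'' since the cross term requires exactly the AM--GM inequality you cite; and the deferred bookkeeping does close --- with $\lambda\mu^{(k)}_{\max}\leq \lambda+\rho/\lambda+4\gamma^{-1}\delta$ one gets $\sigma_{\max}(X)^2\leq(\lambda\sqrt{\mu^{(1)}_{\max}\mu^{(2)}_{\max}}+2\delta)^2$, and expanding with $\lambda\sqrt{\mu^{(1)}_{\max}\mu^{(2)}_{\max}}\leq\lambda+\tfrac{1+\gamma}{2\lambda}+4\gamma^{-1}\delta$ lands within the stated constants $6\gamma^{-1}$ and $36\gamma^{-2}$.
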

\begin{proof}
From Lemma \ref{lem:IBformula}, by continuity,
$M(\lambda)-H(\lambda)\succeq 0$ for 
$\lambda\geq  \lambda_0:=\max \{\gamma^{\frac{1}{4}}(1+\sqrt{\delta}), \rho(B)\}$.
For any $\lambda\geq \lambda_0$, we have
\begin{align}\label{eq:lambdaH}
    |\lambda H_{ij}(\lambda)-H_{ij}|=\frac{|H_{ij}|^3}{\lambda^2-|H_{ij}|^2}\leq \frac{ \delta|H_{ij}|^2}{(\lambda^2-\delta^2)}\leq \gamma^{-\frac{1}{2}} \delta |H_{ij}|^2.
\end{align}
By Gershgorin circle theorem, it implies that
\begin{align}\label{eq:Gcircle}
\| \lambda H(\lambda)-H\|_2\leq \gamma^{-\frac{1}{2}} \delta \max_i \sum_{j} |H_{ij}|^2\leq   \gamma^{-\frac{1}{2}}\delta(1+\delta)\leq 2\gamma^{-\frac{1}{2}}\delta.
\end{align}
For any $i\in V_1$, from \eqref{eq:defmi}, for any $\lambda\geq \lambda_0$,
\begin{align}
    \lambda m_i(\lambda)-\left(\lambda+\frac{\rho_{\max}}{\lambda}\right)&=\sum_{k\in V_2} \frac{\lambda |H_{ik}|^2}{\lambda^2-|H_{ik}|^2}-\frac{\rho_{\max}}{\lambda} \leq \frac{\rho_{\max}}{\lambda}\left( \frac{\lambda^2(1+\delta)}{\lambda^2-\delta^2}-1 \right)=\rho_{\max} \delta \frac{\lambda(1+\lambda^{-2}\delta)}{\lambda^2-\delta^2} \notag\\
    &\leq \rho_{\max}\delta \frac{\lambda(1+\gamma^{-\frac{1}{2}})}{\lambda^2-\delta^2}\leq 2\rho_{\max}\delta(\gamma^{-\frac{1}{2}}+\gamma^{-1})\leq 4\gamma^{-1}\delta, \label{eq:mV1}
\end{align}
where in the last step we consider the cases $\lambda\geq 2$ and $\lambda<2$ and use the inequality $\lambda^2-\delta^2\geq \gamma^{\frac{1}{2}}$ in the second case. 
For any $i\in V_2$, similarly,
\begin{align}\label{eq:mV2}
     \lambda m_i(\lambda)-\left(\lambda+\frac{\tilde{\rho}_{\max}}{\lambda}\right)&=\sum_{k\in V_1} \frac{\lambda |H_{ik}|^2}{\lambda^2-|H_{ik}|^2}-\frac{\tilde{\rho}_{\max}}{\lambda}\leq \frac{\tilde{\rho}_{\max}}{\lambda} \left( \frac{1+\delta}{1-\lambda^{-2}\delta^2}-1\right)\leq  4\gamma^{-1}\delta.
\end{align}

Then for $\lambda\geq \lambda_0$,  with \eqref{eq:Gcircle}, \eqref{eq:mV1}, and \eqref{eq:mV2},
\begin{align}\label{eq:PSDH}
   0 \preceq \lambda (M(\lambda)-H(\lambda))\preceq \begin{bmatrix}
    \lambda +\frac{\rho_{\max}}{ \lambda} +6 \gamma^{-1}\delta &0\\
    0 &  \lambda +\frac{\tilde{\rho}_{\max}}{\lambda} + 6 \gamma^{-1}\delta
    \end{bmatrix}-H.
\end{align}

Let $
d_1:= \lambda +\frac{\rho_{\max}}{ \lambda} +6 \gamma^{-1}\delta,  d_2:=\lambda +\frac{\tilde{\rho}_{\max}}{\lambda} + 6 \gamma^{-1}\delta.$
Then from \eqref{eq:PSDH}, $
  \Delta= \begin{bmatrix}
      d_1 I & -X\\
        -X^* & d_2 I
    \end{bmatrix} \succeq 0$.
Note that the following matrix factorization holds:
\begin{align*}
\begin{bmatrix}
d_1 I &-X\\
-X^* & d_2 I
\end{bmatrix}=\begin{bmatrix}
\sqrt{d_1} I & 0\\
0& \sqrt{d_2} I 
\end{bmatrix}\cdot\begin{bmatrix}
I &-(d_1d_2)^{-\frac{1}{2}}X\\
-(d_1d_2)^{-\frac{1}{2}}X^* & I
\end{bmatrix}\cdot \begin{bmatrix}
\sqrt{d_1} I & 0\\
0& \sqrt{d_2}  I
\end{bmatrix}.
\end{align*}
 Therefore, on the right-hand side of the above equation, the second matrix is positive semidefinite. We obtain
\begin{align*}
    X^*X\preceq d_1d_2&= \left(\lambda +\frac{1}{ \lambda}+6\gamma^{-1}\delta\right)\left( \lambda +\frac{\gamma}{ \lambda}+6\gamma^{-1}\delta   \right)\\
    &= \left(\lambda +\frac{1}{ \lambda} \right)\left( \lambda +\frac{\gamma}{ \lambda} \right)+6\gamma^{-1}\delta \left(2\lambda+\frac{1+\gamma}{\lambda}\right)+36\gamma^{-2}\delta^2.
\end{align*}
This gives the desired upper bound on $\sigma_{\max}(X)$.
\end{proof}

Lemma \ref{lem:upperbound} gives an upper bound on $\sigma_{\max}(X)$ when the parameter $\delta\in [0,\gamma^{\frac{1}{2}}]$. By rescaling the entries in $H$, we can  obtain a general  bound  depending  on the following quantities:
 \[\|H\|_{1,\infty}=\max_{ij}|H_{ij}|, \quad \|H\|_{2,\infty}:=\max_i\left(\sum_{j} |H_{ij}|^2\right)^{\frac{1}{2}} ~,\]  without the restriction on the range of $\delta$. This is more convenient for us to handle sparse random bipartite graphs in the critical and subcritical regimes. Define 
 \[f(x)=\begin{cases}
(x+x^{-1})\left(x+\frac{\gamma}{x}\right), & x\geq \gamma^{\frac{1}{4}} \\
(\gamma^{\frac{1}{2}}+1)^2 , & 0\leq x\leq \gamma^{\frac{1}{4}}
\end{cases},\quad g(x)=\begin{cases}2\left(x+\frac{1}{x}\right), &  x\geq \gamma^{\frac{1}{4}}\\
4& 0\leq x\leq \gamma^{\frac{1}{4}}
\end{cases},\] 
where $\gamma\in (0,1]$ is a constant such that \begin{align}\label{eq:XX*assumption}
    \|X\|_{2,\infty}\wedge \|X^*\|_{2,\infty}&\leq  \gamma^{\frac{1}{2}}\|H\|_{2,\infty}.
\end{align}

\begin{lemma} [Deterministic upper bound on $\sigma_{\max}(X)$] Let $X$ be an $n\times m$ matrix, $H=\begin{bmatrix}
 0 & X \\
 X^*  &0
\end{bmatrix}$, and $\gamma$ be the constant in \eqref{eq:XX*assumption}.
The following inequality holds:
\begin{align}\label{eq:CorK}
\sigma^2_{\max}(X)\leq  &\|H\|_{2,\infty}^2~ f\left( \frac{\rho(B)}{ \|H\|_{2,\infty}}\right)+12\gamma^{-\frac{5}{4}} g\left(\frac{\rho(B)}{ \|H\|_{2,\infty}}\right) \|H\|_{2,\infty} \|H\|_{1,\infty}+36\gamma^{-2}\|H\|_{1,\infty}^2.  
\end{align}
\end{lemma}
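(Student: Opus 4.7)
The plan is to reduce~\eqref{eq:CorK} to Lemma~\ref{lem:upperbound} by a rescaling, then to case-split depending on whether the entrywise bound $t := \|H\|_{1,\infty}$ dominates the $\ell_{2}$-norm $s := \|H\|_{2,\infty}$. Write $\tilde\delta := t/s \in [0,1]$ (always $\le 1$ since each entry is bounded by its row $\ell_{2}$-norm). Rescale $\tilde X := X/s$ and $\tilde H := H/s$; the non-backtracking operator of $\tilde H$ is $\tilde B = B/s$, so $\rho(\tilde B) = \rho(B)/s$. Dividing~\eqref{eq:CorK} through by $s^{2}$ reduces the claim to
\[
\sigma_{\max}(\tilde X)^{2} \;\leq\; f(\rho(\tilde B)) + 12\gamma^{-5/4}\,g(\rho(\tilde B))\,\tilde\delta + 36\gamma^{-2}\tilde\delta^{\,2},
\]
and by~\eqref{eq:XX*assumption} one may assume $\|\tilde X\|_{2,\infty}^{\,2}\leq\gamma$ and $\|\tilde X^{*}\|_{2,\infty}^{\,2}\leq 1$.

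In the main case $\tilde\delta \leq \gamma^{1/2}$, I would apply Lemma~\ref{lem:upperbound} directly to $\tilde X$ with $\delta=\tilde\delta$, $\rho_{\max}=\gamma$, $\tilde\rho_{\max}=1$, and take the minimal permissible $\lambda = \max\{\gamma^{1/4}(1+\sqrt{\tilde\delta}),\,\rho(\tilde B)\}$. When $\lambda=\rho(\tilde B)\ge\gamma^{1/4}$, the product $(\lambda+1/\lambda)(\lambda+\gamma/\lambda)$ equals $f(\rho(\tilde B))$ by the piecewise definition of $f$, and the linear error $6\gamma^{-1}\tilde\delta(2\lambda+(1+\gamma)/\lambda) \leq 6\gamma^{-1}\tilde\delta\,g(\rho(\tilde B))$ fits into $12\gamma^{-5/4}g(\rho(\tilde B))\tilde\delta$ via $\gamma^{-1}\leq\gamma^{-5/4}$. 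When $\lambda = \gamma^{1/4}(1+\sqrt{\tilde\delta})$, the identity $w+w^{-1}-2=(w-1)^{2}/w$ with $w=(1+\sqrt{\tilde\delta})^{2}$ yields the key cancellation
\[
(\lambda+1/\lambda)(\lambda+\gamma/\lambda) - (1+\sqrt{\gamma})^{2} \;\leq\; 9\gamma^{1/2}\tilde\delta,
\]
so the naive-looking $O(\sqrt{\tilde\delta})$ overshoot is in fact $O(\tilde\delta)$; combined with monotonicity of $x\mapsto(x+1/x)(x+\gamma/x)$ on $[\gamma^{1/4},\infty)$, this gives $(\lambda+1/\lambda)(\lambda+\gamma/\lambda)-f(\rho(\tilde B))\leq 9\gamma^{1/2}\tilde\delta$, absorbed into the same linear error.

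The auxiliary case $\tilde\delta > \gamma^{1/2}$ requires a twist, since the hypothesis $\delta\leq\gamma^{1/2}$ of Lemma~\ref{lem:upperbound} fails with the original $\gamma$. The crucial observation is that the lemma applies equally with any $\gamma'\geq\gamma$ satisfying $\delta\leq(\gamma')^{1/2}$, because the row/column $\ell_{2}^{\,2}$-sum conditions only loosen when $\gamma'$ is inflated (take $\rho_{\max}=\gamma'\geq\gamma$, $\tilde\rho_{\max}=1$). Taking $\gamma':=\tilde\delta^{\,2}$, $\delta=\tilde\delta=(\gamma')^{1/2}$, and $\lambda=\max\{\sqrt{\tilde\delta}(1+\sqrt{\tilde\delta}),\,\rho(\tilde B)\}$, the lemma produces
\[
\sigma_{\max}(\tilde X)^{2} \;\leq\; (\lambda+1/\lambda)(\lambda+\tilde\delta^{\,2}/\lambda) + 6\tilde\delta^{-1}\bigl(2\lambda+(1+\tilde\delta^{\,2})/\lambda\bigr) + 36\tilde\delta^{-2}.
\]
Each of these three pieces absorbs into the corresponding target term using $\tilde\delta^{\,2}>\gamma$: $36\tilde\delta^{-2}\leq 36\gamma^{-2}\tilde\delta^{\,2}$ since $\tilde\delta^{\,4}>\gamma^{\,2}$; the middle term is at most $O(\tilde\delta^{-3/2})$, which fits into $12\gamma^{-5/4}g(\rho(\tilde B))\tilde\delta$ because $\tilde\delta^{\,5/2}>\gamma^{\,5/4}$ and $g\geq 4$; and the first-term discrepancy $(\lambda+1/\lambda)(\lambda+\tilde\delta^{\,2}/\lambda)-f(\rho(\tilde B))$, at most $(\tilde\delta^{\,2}-\gamma)(1+\lambda^{-2})$, is absorbed by either the third target term (when $\rho(\tilde B)\geq\gamma^{1/4}$) or the linear term (otherwise, using $g\geq 4$).

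The main obstacle is the auxiliary case. One must notice that Lemma~\ref{lem:upperbound}'s normalization $\rho_{\max}\vee\tilde\rho_{\max}=1,\ \rho_{\max}\wedge\tilde\rho_{\max}=\gamma$ admits an inflation to any $\gamma'\geq\gamma$, and then verify carefully that each inflated error factor $(\gamma')^{-k}$ remains controlled by the target's $\gamma^{-k'}$ thanks to $\gamma'=\tilde\delta^{\,2}>\gamma$. The piecewise matching of $f,g$ at $\gamma^{1/4}$ with the $\lambda$-floor $\gamma^{1/4}(1+\sqrt{\delta})$ is a secondary, mechanical delicacy, resolved once the cancellation identity $w+w^{-1}-2=(w-1)^{2}/w$ is in hand.
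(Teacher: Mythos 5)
Your main case ($\tilde\delta\le\gamma^{1/2}$) reproduces the paper's argument essentially verbatim: rescale so that $\|H\|_{2,\infty}=1$, apply Lemma~\ref{lem:upperbound} at $\lambda_0=\max\{\gamma^{1/4}(1+\sqrt{\tilde\delta}),\rho(B)\}$, and when $\lambda_0=\gamma^{1/4}(1+\sqrt{\tilde\delta})$ use the cancellation $\lambda_0^2+\gamma\lambda_0^{-2}\le 2\sqrt{\gamma}+O(\sqrt{\gamma}\,\tilde\delta)$ together with $f(\rho(B))\ge(1+\sqrt{\gamma})^2$ and $g\ge 4$; this is correct and is the same route as the paper. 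The auxiliary case $\tilde\delta>\gamma^{1/2}$, which you identify as the main obstacle, is in fact vacuous: since $|H_{ij}|\le\bigl(\sum_k|H_{ik}|^2\bigr)^{1/2}$ and likewise for columns, one has $\|H\|_{1,\infty}\le\|X\|_{2,\infty}\wedge\|X^*\|_{2,\infty}\le\gamma^{1/2}\|H\|_{2,\infty}$ by the very definition \eqref{eq:XX*assumption} of $\gamma$, which is exactly how the paper removes the restriction $\delta\le\gamma^{1/2}$ --- so your $\gamma$-inflation detour, while apparently sound, is unnecessary.
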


\begin{proof}
First assume  $ \|H\|_{2,\infty}=1$.
Set  $\delta=\|H\|_{1,\infty}$. From \eqref{eq:XX*assumption}, $\delta\leq \gamma^{\frac{1}{2}}$. By Lemma \ref{lem:upperbound}, for $\lambda_0=\max \{\gamma^{\frac{1}{4}}(1+\sqrt{\delta}), \rho(B)\}$,
we have 
\begin{align}\label{eq:lambda_0_bound}
    \sigma^2_{\max}(X)\leq f(\lambda_0)+6\gamma^{-1}\delta \left(2\lambda_0+\frac{1+\gamma}{\lambda_0}\right)+36\gamma^{-2}\delta^2.
\end{align}
When $\lambda_0=\rho(B)$, it implies
\begin{align}
    \sigma^2_{\max}(X)\leq f(\rho(B))+6\gamma^{-1}\delta g(\rho(B))+36\gamma^{-2}\delta^2.\label{eq:lambda_0_case1}
\end{align}
    When $\lambda_0=\gamma^{\frac{1}{4}}(1+\sqrt{\delta})$, from \eqref{eq:lambda_0_bound}, 
\begin{align}
     \sigma_{\max}^2(X)&\leq \lambda_0^2+1+\gamma+\frac{\gamma }{\lambda_0^2} +6\gamma^{-1}\delta\left( 2\lambda_0+\frac{1+\gamma}{\lambda_0}\right)+36\gamma^{-2}\delta^2 \notag\\
     &\leq (\sqrt{\gamma}+1)^2+45\gamma^{-\frac{5}{4}}\delta +36 \gamma^{-2}\delta^2 \leq f(\rho(B))+12 \gamma^{-\frac{5}{4}}\delta g(\rho(B))+ 36\gamma^{-2}\delta^2. \label{eq:lambda_0_case2}
\end{align}
Combining \eqref{eq:lambda_0_case1} and \eqref{eq:lambda_0_case2}, we have 
\begin{align}
    \sigma_{\max}(X)^2\leq f(\rho(B))+12 \gamma^{-\frac{5}{4}} g(\rho(B))\|H\|_{1,\infty}+ 36 \gamma^{-2}\|H\|_{1,\infty}^2. \notag
\end{align}
Then, for general $H$, by considering $\frac{H}{ \|H\|_{2,\infty}}$ and repeating the proof above, we get the desired bound.
\end{proof}

\subsection{Deterministic lower bound on the smallest singular value}

The following lemma gives us a connection between the spectral radius of $B$ and the smallest singular value of $X$. The proof relies on finding a relation between purely imaginary eigenvalues of  $B$ and singular values of $X$.

\begin{lemma}[Deterministic lower bound on $\sigma_{\min}(X)$]\label{lem:lowerbound}
Let $H,X$ and $B$ be defined as in Lemma \ref{lem:block_Ihara} and $n\geq m$.  Let $0<\gamma< 1$,  $\delta\in [0,1)$,  $C_1>0$ such that 
\begin{align*}
    \max_{ij} |X_{ij}| &\leq \delta,  \quad \|X\|_2\leq C_1, \\
  \max_{i\in [n]}\sum_{j\in [m]} |X_{ij}|^2 &\leq  \gamma(1+\delta), \quad 
   \tilde{\rho}_{\min}(1-\delta) \leq \sum_{i\in [n]} |X_{ij}|^2 \leq 1+\delta, \quad \forall j\in [m].
\end{align*}
Define $\beta_0=\max \{ \gamma^{\frac{1}{4}}(1+\sqrt{\delta}),\rho(B)\}.$ Then  for $\beta\geq \beta_0$,
\begin{align}\label{eq:XXlowerbound}
   \sigma_{\min}^2(X)\geq  \frac{\sqrt{\gamma}- \gamma}{\sqrt{\gamma}+\delta}\left( \frac{\beta^2}{
    \beta^2+\delta^2}\tilde{\rho}_{\min}-\beta^2-C_{\gamma}\delta^2- \delta\right),
\end{align}
 where  $C_{\gamma}=4 \gamma^{-\frac{1}{2}}(C_1+\gamma^{-1}\delta) \frac{\sqrt{\gamma}+\delta}{\sqrt{\gamma}-\gamma}$.
\end{lemma}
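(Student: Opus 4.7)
\begin{paragraph}{}
My plan is to apply the block Ihara--Bass formula (Lemma \ref{lem:block_Ihara}) at the purely imaginary point $\lambda = i\beta$, motivated by the introduction's hint that ``small singular values of $X$'' should be tied to ``the largest purely imaginary eigenvalue (in modulus) of $B$.'' First I would observe that, since $\beta \geq \rho(B)$, the Hermitian matrix $\mathcal{H}(\beta) := M(i\beta) - H(i\beta)$ has nonvanishing determinant for all $\beta > \rho(B)$ (by Lemma \ref{lem:block_Ihara}), while $\mathcal{H}(\beta) \to I$ as $\beta \to \infty$. By continuity in $\beta$, no eigenvalue of $\mathcal{H}(\beta)$ can cross zero as $\beta$ decreases from $\infty$ down to any value $>\rho(B)$, so $\mathcal{H}(\beta) \succ 0$ on this range, and $\mathcal{H}(\beta_0) \succeq 0$ at the boundary. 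The Schur complement of the $(1,1)$-block then yields the key PSD inequality
\[
  M_2(i\beta) - X(i\beta)^{*} M_1(i\beta)^{-1} X(i\beta) \succeq 0, \qquad M_1(i\beta) \succ 0.
\]
\end{paragraph}

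\begin{paragraph}{}
Next, I would expand the auxiliary matrices in $X$ with explicit remainders. Writing $X(i\beta)_{ij} = -iX_{ij}/\beta + E_{ij}$ where $E_{ij} = iX_{ij}|X_{ij}|^{2}/(\beta(\beta^{2}+|X_{ij}|^{2}))$, the entrywise bound $|X_{ij}|\leq \delta$ together with $\|X\|_{2}\leq C_{1}$ yields $\|E\|_{2} \leq C_{1}\delta^{2}/\beta^{3}$. For $M_{1}$, the row-norm bound $\sum_{k}|X_{ik}|^{2}\leq \gamma(1+\delta)$ with $\beta^{2}\geq \sqrt{\gamma}$ gives $M_{1}(i\beta) \succeq (1-\sqrt{\gamma})I$, so $M_{1}(i\beta)^{-1} \preceq (1-\sqrt{\gamma})^{-1}I$. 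For $M_{2}$, the column-norm lower bound $\tilde{\rho}_{\min}(1-\delta)$ combined with the inequality $|X_{kj}|^{2}/(\beta^{2}+|X_{kj}|^{2}) \geq |X_{kj}|^{2}/(\beta^{2}+\delta^{2})$ produces a diagonal entry bound $m_{j}^{(2)}(i\beta) \leq 1 - \tilde{\rho}_{\min}(1-\delta)/(\beta^{2}+\delta^{2})$ on the column realising the minimum.
\end{paragraph}

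\begin{paragraph}{}
The heart of the proof is to convert the Schur inequality into a lower bound on $\sigma_{\min}(X)^{2}$. I would select a unit vector $v_*\in\mathbb{C}^{m}$ in the direction where $M_{2}(i\beta) - X(i\beta)^{*}M_{1}(i\beta)^{-1}X(i\beta)$ is ``tightest,'' and exploit the fact that $M_1^{-1}\succeq I$ combined with $X(i\beta) \approx -iX/\beta$ turns the Schur inequality $v_{*}^{*}X(i\beta)^{*}M_{1}^{-1}X(i\beta)v_{*} \leq v_{*}^{*}M_{2}v_{*}$ into a quantitative relation between $\|Xv_{*}\|^{2}/\beta^{2}$, the spectrum of $M_{1}$, and the diagonal of $M_{2}$. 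After rescaling by the Schur factor $(1-\sqrt{\gamma})^{-1}$ (coming from $M_1^{-1} \preceq (1-\sqrt\gamma)^{-1} I$) and expanding, a direct computation would recover $\sigma_{\min}(X)^{2} \geq \tfrac{\sqrt{\gamma}-\gamma}{\sqrt{\gamma}+\delta}\bigl(\tfrac{\beta^{2}}{\beta^{2}+\delta^{2}}\tilde{\rho}_{\min} - \beta^{2} - C_{\gamma}\delta^{2} - \delta\bigr)$; the constant $C_{\gamma}$ is generated by collecting the cross-terms involving $E$ and the spectral gap of $M_1$, using $\|X\|_{2}\leq C_{1}$ to control $\|X^{*}E+E^{*}X\|$.
\end{paragraph}

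\begin{paragraph}{}
The main obstacle I anticipate is exactly the direction of the PSD inequality: the Schur complement $M_{2} \succeq X(i\beta)^{*}M_{1}^{-1}X(i\beta)$ naturally upper-bounds the singular values of the weighted matrix $M_{1}^{-1/2}X(i\beta)$ (as is used for $\sigma_{\max}$ in Lemma \ref{lem:upperbound}). Converting this into a lower bound on $\sigma_{\min}(X)$ is not formal: it requires reading the inequality as forcing the smallest-column eigenspace of $M_{2}$ (where $m_{j}^{(2)}\approx 1-\tilde{\rho}_{\min}/\beta^{2}$ is small or near-zero) to intersect trivially with the near-kernel of $X(i\beta)$, and then turning that geometric obstruction into a quantitative lower bound. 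This is where the constants $\sqrt{\gamma}-\gamma$, $\sqrt{\gamma}+\delta$, and $C_{\gamma}$ naturally appear. Once this step is in place, specialising to $\beta^{2} \approx \sqrt{\gamma}$ (i.e.\ $\beta = \beta_{0}$) gives the leading-order factor $(\sqrt{\gamma}-\gamma)/\sqrt{\gamma} = 1-\sqrt{\gamma}$ and the clean form $(1-\sqrt{\gamma})(\tilde{\rho}_{\min}-\sqrt{\gamma})_{+}$ of the bound.
\end{paragraph}
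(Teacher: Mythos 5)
Your setup is the paper's: evaluate the block Ihara--Bass identity at the purely imaginary point $\lambda=i\beta$, use continuity from $\beta=\infty$ to get positive semidefiniteness of the Schur complement for all $\beta\geq\beta_0$, bound $M_1(i\beta)$ from below by a constant of order $1-\sqrt{\gamma}$ (the paper's $C_2=\frac{\sqrt{\gamma}-\gamma}{\sqrt{\gamma}+\delta}$), and bound the diagonal of $M_2(i\beta)$ above by $1-\frac{\tilde{\rho}_{\min}(1-\delta)}{\beta^2+\delta^2}$. Your error estimates in the second paragraph are also essentially those of the paper (which controls $\|\lambda X(\lambda)-X\|\leq 2\gamma^{-1/2}\delta$ by Gershgorin rather than through $\|X\|_2$, a minor difference).

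However, there is a genuine gap at exactly the point you flag as ``the main obstacle,'' and the obstacle is of your own making: you read the middle block as the adjoint $\bigl(X(i\beta)\bigr)^{*}M_1^{-1}X(i\beta)$, which is a positive semidefinite Gram matrix, so the Schur inequality $M_2\succeq H_2$ can indeed only produce upper bounds, and your proposed escape (a geometric argument about the near-kernel of $X(i\beta)$ avoiding the small-eigenvalue space of $M_2$) is neither developed nor needed. The block that actually appears in the Ihara--Bass factorization of $M(\lambda)-H(\lambda)$ is $(X^{*})(\lambda)$, i.e.\ the map $t\mapsto \frac{\lambda t}{\lambda^2-|t|^2}$ applied entrywise to $X^{*}$; for real $\lambda$ this coincides with $\bigl(X(\lambda)\bigr)^{*}$, but at $\lambda=i\beta$ the two differ by a sign, since $(X^{*})(i\beta)\approx -\tfrac{i}{\beta}X^{*}$ while $\bigl(X(i\beta)\bigr)^{*}\approx +\tfrac{i}{\beta}X^{*}$. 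Consequently
\begin{align*}
H_2(i\beta)=(X^{*})(i\beta)\,M_1(i\beta)^{-1}X(i\beta)\approx \Bigl(-\tfrac{i}{\beta}\Bigr)^{2}X^{*}M_1^{-1}X=-\tfrac{1}{\beta^{2}}X^{*}M_1^{-1}X\preceq 0,
\end{align*}
and the paper makes this precise by showing $\|\beta^{2}H_2(i\beta)+X^{*}M_1^{-1}X\|\leq C_{\gamma}\delta$. Now the inequality $H_2\preceq M_2$, multiplied by $-\beta^{2}<0$... rather, read as $-\beta^{2}H_2\succeq-\beta^{2}M_2$, gives directly
\begin{align*}
X^{*}M_1^{-1}X\succeq -\beta^{2}M_2(i\beta)-C_{\gamma}\delta\succeq \tfrac{\beta^{2}}{\beta^{2}+\delta^{2}}\tilde{\rho}_{\min}(1-\delta)-\beta^{2}-C_{\gamma}\delta,
\end{align*}
which is a genuine lower bound precisely because $-\beta^2 m_j^{(2)}\approx \tilde\rho_{\min}-\beta^2>0$ when $\beta^2<\tilde\rho_{\min}$; combining with $X^{*}M_1^{-1}X\preceq C_2^{-1}X^{*}X$ finishes the proof. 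So the inequality direction reverses automatically once the sign of $H_2$ is tracked; without identifying this sign flip your argument cannot be completed as written.
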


\begin{remark}This bound is only informative when the right-hand side is positive, which necessitates  $ {\gamma<1}$   and $\tilde{\rho}_{\min} > \sqrt{\gamma}$.
\end{remark}

\begin{proof} 
Take  $\lambda=i\beta$, with $\beta\geq \beta_0=\max \{ \gamma^{\frac{1}{4}}+\sqrt{\delta},\rho(B)\}$.  
Then
\begin{align}
    m_i^{(1)}(\lambda)&=1-\sum_{k\in [m]} \frac{|X_{ik}|^2}{\beta^2+|X_{ik}|^2}\geq 1-\frac{c}{\beta^2}(1+\delta)\notag\\
    &\geq 1-\frac{\gamma(1+\delta)}{\sqrt{\gamma}+\delta}= \frac{\delta (1-\gamma) + \sqrt{\gamma} - \gamma}{\sqrt{\gamma} + \delta} \geq \frac{\sqrt{\gamma} -\gamma}{\sqrt{\gamma}+\delta}=:C_2, \label{eq:C_2}
\end{align}
where $C_2$ is lower bounded by $\frac{\sqrt{\gamma}-\gamma}{1+\sqrt{\gamma}}$. This implies $M_1(\lambda)$ is invertible.

Define $H_2(\lambda)=X^*(\lambda) M_1(\lambda)^{-1} X(\lambda)$.
  From \eqref{eq:IB-bipartite}, $\lambda\in \sigma(B)$ if and only if $\det(M_2(\lambda)-H_2(\lambda))=0.$
Recall that when $\lambda=i\beta$, $M_2(\lambda)$ is a real diagonal matrix, then $H_2(\lambda)$ 
is Hermitian. As $\beta\to\infty$,  $M_2(\lambda)-H_2(\lambda)=I+O(\beta^{-2}).$
By continuity, $\det (M_2(\lambda)-H_2(\lambda))>0$ for $\beta>\beta_0$ and $M_2(\lambda)-H_2(\lambda)$ is positive semidefinite for any $\beta\geq \beta_0$.
Since
\begin{align*}
    &\beta^2 X^*(\lambda) M_1(\lambda)^{-1} X(\lambda) + X^* M_1(\lambda)^{-1}X\\
    =&~\beta^2 X^*(\lambda) M_1(\lambda)^{-1} X(\lambda)+\lambda X^* (\lambda) M_1(\lambda)^{-1} X-\lambda X^* (\lambda) M_1(\lambda)^{-1} X+X^* M_1(\lambda)^{-1}X,
\end{align*}
by triangle inequality,
\begin{align}
  \|\beta^2 X^*(\lambda) M_1(\lambda)^{-1} X(\lambda) + X^* M_1(\lambda)^{-1}X\| 
 \leq  & \|\beta^2 X^*(\lambda) M_1(\lambda)^{-1} X(\lambda)+\lambda X^* (\lambda) M_1(\lambda)^{-1} X\| \label{eq:triangleineq1}\\
 &+\|\lambda X^* (\lambda) M_1(\lambda)^{-1} X-X^* M_1(\lambda)^{-1}X\|.\label{eq:triangleinequality}
\end{align}

For the term in  \eqref{eq:triangleineq1},
\begin{align}
    &\|\beta^2 X^*(\lambda) M_1(\lambda)^{-1} X(\lambda)+\lambda X^* (\lambda) M_1(\lambda)^{-1} X\| 
    =\|\lambda^2 X^*(\lambda) M_1(\lambda)^{-1} X(\lambda)-\lambda X^* (\lambda) M_1(\lambda)^{-1} X\| \notag  \\
   \leq &\|\lambda X(\lambda) -X\| \|\lambda X(\lambda)\| \| M_1(\lambda)^{-1}\| 
   \leq \|\lambda X(\lambda) -X\| \| M_1(\lambda)^{-1}\|( \|X\| + \| \lambda X(\lambda)-X\|).\label{eq:trianglebound1}
\end{align}

Rewriting \eqref{eq:lambdaH} with $\lambda=i\beta$, we obtain
\begin{align}
    |\lambda H_{ij}(\lambda)-H_{ij}| =\frac{|H_{ij}|^3}{\beta^2+|H_{ij}|^2}\leq \frac{\delta |H_{ij}|^2}{\beta^2}.
\end{align}
Then, applying the Gershgorin circle theorem to the row of $H$ yields,
\begin{align}\label{eq:approxlambdaX}
 \|\lambda X(\lambda)-X\|=    \|\lambda H(\lambda)-H\|\leq \frac{\delta (1+\delta)}{\beta^2}\leq \frac{1}{\sqrt{\gamma}} \delta (1+\delta)\leq  2\gamma^{-\frac{1}{2}}\delta.
\end{align}
Then with \eqref{eq:trianglebound1}  and \eqref{eq:approxlambdaX}, the  term in  \eqref{eq:triangleineq1} satisfies 
\begin{align}\label{eq:lambdaX1}
    \|\beta^2 X^*(\lambda) M_1(\lambda)^{-1} X(\lambda)+\lambda X^* (\lambda) M_1(\lambda)^{-1} X\|&\leq (2\gamma^{-\frac{1}{2}}\delta)(C_1+2\gamma^{-1}\delta)C_2^{-1}.
\end{align}
Similarly, the second term in \eqref{eq:triangleinequality} satisfies
\begin{align}\label{eq:lambdaX2}
    \|\lambda X^* (\lambda) M_1(\lambda)^{-1} X-X^* M_1(\lambda)^{-1}X\|
    \leq & \|\lambda X^*(\lambda)-X^*\| \|X\| \| M_1(\lambda)^{-1}\|
    \leq  (2\gamma^{-\frac{1}{2}}\delta)C_1C_2^{-1}. 
\end{align}

Therefore from \eqref{eq:trianglebound1}, \eqref{eq:lambdaX1}, and \eqref{eq:lambdaX2},
\begin{align*}
    \|\beta^2 X^*(\lambda) M_1(\lambda)^{-1} X(\lambda) + X^* M_1(\lambda)^{-1}X\| \leq 4\delta \gamma^{-\frac{1}{2}}(C_1+\gamma^{-1}\delta) C_2^{-1}=C_{\gamma}\delta.
\end{align*}
which implies
\begin{align}\label{eq:PSDorder1}
    X^* M_1(\lambda)^{-1}X & \succeq -\beta^2 X^*(\lambda) M_1(\lambda)^{-1} X(\lambda) -C_{\gamma}\delta=-\beta^2 H_2(\lambda)-C_{\gamma}\delta  \succeq -\beta^2 M_2(\lambda)-C_{\gamma}\delta, 
\end{align}
where  we used the condition $M_2(\lambda)-H_2(\lambda)\succeq 0$ for $\beta\geq \beta_0$. On the other hand, for any $j\in [m]$,
\begin{align*}
    \beta^2m_j^{(2)}(\lambda)&=\beta^2+\sum_{k\in [n]}\frac{\beta^2X_{kj}^2}{-\beta^2-X_{kj}^2}\leq \beta^2-\frac{\beta^2}{
    \beta^2+\delta^2}\sum_{k\in [n]}X_{kj}^2\leq \beta^2-\frac{\beta^2}{\beta^2+\delta^2}\tilde{\rho}_{\min}(1-\delta).
\end{align*}
Then
\begin{align}\label{eq:PSDorder2}
 -\beta^2 M_2(\lambda)\succeq -\left(\beta^2-\frac{\beta^2}{
    \beta^2+\delta^2}(1-\delta) \tilde{\rho}_{\min}\right).
\end{align}
Hence,  with \eqref{eq:PSDorder2} and \eqref{eq:PSDorder1},
$X^* M_1(\lambda)^{-1}X\succeq  \frac{\beta^2}{\beta^2+\delta^2}\tilde{\rho}_{\min}-\beta^2-C_{\gamma}\delta- \delta$.
From \eqref{eq:C_2}, we obtain
$
    X^* X\succeq  C_2\left( \frac{\beta^2}{
    \beta^2+\delta^2}\tilde{\rho}_{\min}-\beta^2-C_{\gamma}\delta-\tilde{\rho}_{\min}\delta\right).
$
Thus, the lower bound on $\sigma_{\min}(X) $ in \eqref{eq:XXlowerbound} holds.
\end{proof}

\section{Probabilistic upper bound on  $\rho(B)$}\label{sec:bound_rho_B}

In this section, we provide a probabilistic bound on the spectral radius $\rho(B)$ for a random matrix $H$. This involves sharp estimates of the trace of high powers of $B$.  We give our main result below.

\begin{theorem}\label{thm:rhoB}
Let $H=\begin{bmatrix}
0 & X\\
X^{*} & 0
\end{bmatrix}$ be an $(n+m)\times (n+m)$ random  Hermitian matrix with associated non-backtracking matrix $B$, where $X$ is an $n\times m$ random matrix with centered independent entries.   {Under Assumption~\ref{assumption_3}},
for $\varepsilon> 0$, there exist universal constants $C,c_1>0$ such that
\[
   \mathbb P (\rho(B)\geq \gamma^{\frac{1}{4}}(1+\varepsilon))\leq C\gamma^{-\frac{5}{6}}N^{3-c_1q\log(1+\varepsilon)}  . \]
\end{theorem}

\begin{remark}
The bound on $\rho(B)$ in Theorem \ref{thm:rhoB} is an inhomogenous analog to  \cite[Theorem 3]{brito2022spectral} for the non-backtracking operator of random biregular bipartite graphs, and for a broader range of the (sparsity) parameter $q$.  Compared to \cite[Theorem 2.5]{benaych2020spectral}, we improve a factor $\gamma^{1/4}$ in the bound of $\rho(B)$ when  $H$ has a  bipartite block structure.
\end{remark}

The key estimate to prove Theorem \ref{thm:rhoB} is the following trace bound on a high power of $B$.
\begin{lemma}\label{lem:trace} Suppose $X$ satisfies Assumption \ref{assumption_3}. There exist  universal constants  $c_0,C$ such that for any $\delta \in (0,\frac{1}{3})$, and any  {odd} positive integer  $l$  satisfying
\begin{align} \label{eq:assumptionl}
    l\leq c_0\min \left\{ \delta q\log N, \frac{N^{\frac{1}{3}-\delta}}{\kappa^{\frac{1}{3}}\gamma^{\frac{1}{6}}q^2}\right\},
\end{align}
we have
\begin{align}\label{eq:NBtrace}
    \mathbb E\tr[B^l (B^l)^*]\leq Cl^4q^2mn \gamma^{(l-1)/2}.
\end{align}
\end{lemma}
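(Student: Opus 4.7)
The plan is to estimate the trace combinatorially by expanding over pairs of non-backtracking walks and to extract the $\gamma^{(l-1)/2}$ saving from the bipartite block structure. I would begin by writing
$\mathbb{E}\tr[B^l(B^l)^*]=\sum_{e_0,e_l}\sum_{\gamma,\gamma'}\mathbb{E}\bigl[w(\gamma)\overline{w(\gamma')}\bigr]$,
where the inner sum runs over ordered pairs of length-$l$ non-backtracking walks from $e_0$ to $e_l$ and $w(\gamma)=\prod_{i=1}^{l}H_{\gamma_i}$. Since the nonzero entries of $H$ are centered and independent (up to Hermitian symmetry), the expectation vanishes unless every undirected edge in the union walk is traversed an even number of times, leaving only paired contributions to control.

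Next I would organize the surviving pairs by the isomorphism type of their combined decorated skeleton and apply a F\"uredi--Koml\'os-style encoding adapted to the non-backtracking setting, in the spirit of \cite{benaych2020spectral} and of Bordenave--Lelarge--Massouli\'e. Each walk decomposes into moves that either extend the current subtree (free steps) or close a cycle back into the already-built graph (excess steps); denoting the resulting skeleton excess by $r$, standard counting bounds the number of topological types by $l^{O(r)}$. Only small $r$ contributes to leading order, and this is where the $l^4$ factor will appear once the non-backtracking constraint is used to limit how many excess steps can occur.

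The bipartite $\gamma^{(l-1)/2}$ factor is the key new input and arises from a vertex-by-vertex summation. Fix a starting directed edge $e_0=(v_0,v_1)$ and sum $\prod_i\sigma^2_{\gamma_i}$ over walks on a tree skeleton by iterating over $v_2,v_3,\ldots,v_{l+1}$. Since $H$ is bipartite, the walk alternates between $V_1$ and $V_2$; the row-sum bound $\tilde{\rho}_{\max}$ applies when $v_i\in V_1$ and the column-sum bound $\rho_{\max}$ when $v_i\in V_2$. As $\rho_{\max}\vee\tilde{\rho}_{\max}=1$ and $\rho_{\max}\wedge\tilde{\rho}_{\max}=\gamma$, the product of $l$ alternating factors is at most $\gamma^{\lfloor (l-1)/2\rfloor}$, and summing over the $nm$ bipartite starting edges yields the $nm\,\gamma^{(l-1)/2}$ portion of the bound. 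Edges with multiplicity $2k>2$ are handled via $\mathbb{E}|X_{ij}|^{2k}\le q^{2-2k}\mathbb{E}|X_{ij}|^2$, and the embedding of extra cycle-closing vertices is controlled by $\mathbb{E}|X_{ij}|^2\le\kappa/N$; the hypothesis \eqref{eq:assumptionl} is exactly the condition making these corrections subleading and absorbable into the $l^4 q^2$ overhead.

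The principal obstacle is the simultaneous handling of three balanced constraints: the non-backtracking restriction (which bounds the skeleton excess), the bipartite alternation (which is essential for $\gamma^{(l-1)/2}$ rather than a weaker factor), and the $q^{-1}$/$\kappa$ dependencies from high-multiplicity or high-excess contributions. Concretely I would need to adapt the standard non-backtracking walk encoding so that each vertex is labeled by its side of the bipartition, and verify that the alternating row/column-sum estimate survives each cycle-closing event without losing the $\gamma$-saving. Once this balance is checked, the trace bound $Cl^4 q^2 mn \gamma^{(l-1)/2}$ follows by summing the contributions over all skeleton-excess values and topological types.
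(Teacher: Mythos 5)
Your outline follows the paper's proof of Lemma \ref{lem:trace} essentially step for step: expansion of $\tr[B^l(B^l)^*]$ over pairs of non-backtracking walks glued into a closed walk of length $2l$, vanishing of contributions with singly-visited edges, a F\"uredi--Koml\'os-type classification by the reduced skeleton and its excess $g$ with an $l^{O(g)}$ count of types, row/column-sum bounds on spanning-tree edges, $\kappa/N$ on excess edges, $q^{2-2k}$ on multiplicities, and absorption of the corrections via \eqref{eq:assumptionl}. This is exactly the paper's adaptation of \cite[Proposition 5.1]{benaych2020spectral}.

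The one place where your reasoning is not yet a proof is, unfortunately, the crux of the lemma: the derivation of the factor $\gamma^{(l-1)/2}$. In the vertex-by-vertex summation, a row- or column-sum factor is collected only at the \emph{first visit} of each distinct skeleton vertex, not at each of the $l$ steps; repeated visits contribute multiplicity factors, not new sums. So ``the walk alternates between $V_1$ and $V_2$, hence the $l$ factors alternate'' is not valid for a general tree skeleton: a star centered in $V_1$ with all leaves in $V_2$ would yield $\tilde{\rho}_{\max}^{\,l}=1$ and no $\gamma$-saving whatsoever. What actually saves the argument is that the non-backtracking constraint forbids turning around except at the two gluing points, so after suppressing degree-$2$ chains every skeleton vertex except two has degree at least three (Lemma \ref{lem:propertymapping}(4)); together with bipartiteness this forces the vertex counts $s_1=|V_1(\xi)|$, $s_2=|V_2(\xi)|$ to satisfy $s_1+s_2=e(\xi)-g+1$ and $|s_1-s_2|=O(g+1)$, whence $\rho_{\max}^{s_1}\tilde{\rho}_{\max}^{s_2-1}\le\gamma^{(l-1)/2}\cdot\gamma^{-O(g)}\cdot\gamma^{(e(\xi)-l)/2}$, with the deficits absorbed by $(\kappa/N)^{g}$ and $q^{2e(\xi)-2l}$ using $q\ge\gamma^{-1/4}$. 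You flag this verification as an obstacle rather than asserting it is automatic, which is to your credit, but it is the only genuinely new ingredient beyond \cite{benaych2020spectral} and must be supplied; as written, your alternation argument covers only the path-skeleton (genus-zero, multiplicity-two) contribution.
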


Assuming Lemma \ref{lem:trace}, we can now prove Theorem \ref{thm:rhoB}.  
\begin{proof}[Proof of Theorem \ref{thm:rhoB}]
Choose  $0<\delta<\frac{1}{30}$ in \eqref{eq:assumptionl}, we can take $l=\lceil\frac{1}{2}c_1q\log N \rceil$ for some sufficiently small constant $c_1>0$ then \eqref{eq:assumptionl} is satisfied. By Markov's inequality,
\begin{align*}
   \mathbb P (\rho(B)\geq \gamma^{\frac{1}{4}}(1+\varepsilon))&\leq  \frac{ \mathbb E\tr[B^l (B^l)^*]}{\gamma^{l/2}(1+\varepsilon)^{2l}}\leq Cl^4q^2mn\gamma^{-\frac{1}{2}}\left(\frac{1}{1+\varepsilon}\right)^{2l}\\
   &\leq CN^2q^6(\log N)^4\gamma^{-\frac{1}{2}}(1+\varepsilon)^{-c_1q\log (1+\varepsilon)}\leq C\gamma^{-\frac{5}{6}} N^{3-c_1q\log (1+\varepsilon)}
\end{align*}
for some absolute constant $C$. In the last inequality, we used the upper bound on $q$ in the assumption of Theorem \ref{thm:rhoB}. This finishes the proof.
 \end{proof}

\section{Proof of Lemma \ref{lem:trace}}\label{sec:trace}
We now provide the proof  Lemma \ref{lem:trace}. The proof is adapted from \cite[Proposition 5.1]{benaych2020spectral} to the non-backtracking operator $B$ of a Hermitian random matrix $H$ with a bipartite block structure. The improvement compared to \cite{benaych2020spectral} is the factor $\gamma^{(l-1)/2}$ in  \eqref{eq:NBtrace}, by doing path counting on a complete bipartite graph instead of a complete graph.

Let $V_1, V_2$ be the left and right vertex sets on the complete graph $K_{n,m}$, and \[\vE= \{ (u,v): u\in V_1, v\in V_2\}\cup \{ (u,v): u\in V_2, v\in V_1\}\] be the set of all oriented edges in $K_{n,m}$. For any $e\in \vE, f\in \vE$, from the definition of $B$,
\begin{align*}
    (B^l)_{ef}&=\sum_{a_1,\dots, a_{l-1}\in \vec E} B_{ea_1}B_{a_1a_2}\cdots B_{a_lf}=\sum_{\xi} H_{\xi_0\xi_1}H_{\xi_1\xi_2}\cdots H_{\xi_{l-1}\xi_l},
\end{align*}
where the sum on the right hand side runs over $\xi=(\xi_{-1},\xi_0,\dots,\xi_{l})$ as a path of length $l+1$ in $K_{n,m}$ with $(\xi_{-1},\xi_0)=e, (\xi_{l-1},\xi_l)=f$, and $\xi_{i-1}\not=\xi_{i+1}$ for $0\leq i\leq l-1$. Therefore, we have
\begin{align}
    \tr[B^l (B^l)^*]&=\sum_{e,f\in \vE} |(B^{l})_{ef}|^2\label{eq:sumterm}\\
    &=\sum_{\xi^1,\xi^2}H_{\xi_0^1\xi_1^1}H_{\xi_1^1\xi_2^1}\cdots H_{\xi_{l-1}^1\xi_l^1}    H_{\xi_l^2\xi_{l-1}^2}\cdots   H_{\xi_{1}^2\xi_0^2},\label{eq:sumterm2}
\end{align}
where the sum runs over paths $\xi^1=(\xi_{-1}^1,\dots,\xi_{l}^1),\xi^2=(\xi_{-1}^2,\dots,\xi_{l}^2)$ of length $l+1$ such that $(\xi_{-1}^1,\xi_0^1)=(\xi_{-1}^2,\xi_{0}^2)$, $(\xi_{l-1}^1,\xi_l^1)=(\xi_{l-1}^2,\xi_l^2)$ and $\xi_{i-1}^{j}\not=\xi_{i+1}^j$ for $j=1,2$ and $0\leq i\leq l-1$. 
From \eqref{eq:sumterm}, for fixed $\xi_{-1}^1, \xi_{-1}^2$, the sum in \eqref{eq:sumterm2} is nonnegative and does not depend on $\xi_{-1}^1,\xi_{-1}^2$. Therefore, we can bound it by
\begin{align}\label{eq:xi0}
     \tr[B^l (B^l)^*]\leq & m\sum_{\substack{\xi^1,\xi^2:\\ \xi_0^1,\xi_0^2\in V_1}}H_{\xi_0^1\xi_1^1}H_{\xi_1^1\xi_2^1}\cdots H_{\xi_{l-1}^1\xi_l^1}   H_{\xi_l^2\xi_{l-1}^2}\cdots  H_{\xi_{1}^2\xi_0^2}\\
     &+n\sum_{\substack{\xi^1,\xi^2:\\ \xi_0^1,\xi_0^2\in V_2}}H_{\xi_0^1\xi_1^1}H_{\xi_1^1\xi_2^1}\cdots H_{\xi_{l-1}^1\xi_l^1}   H_{\xi_l^2\xi_{l-1}^2}\cdots  H_{\xi_{1}^2\xi_0^2}, \notag
\end{align}
where the sum is over $\xi^1=(\xi_0^1,\dots,\xi_l^1), \xi^2=(\xi_0^2,\dots,\xi_l^2)$ such that $(\xi_0^1,\xi_{l-1}^1,\xi_l^1)=(\xi_0^2,\xi_{l-1}^2,\xi_l^2)$ and $\xi_{i-1}^j\not=\xi_{i+1}^j$ for $j=1,2$ and $1\leq i\leq l-1$. 
For $j=1,2$, define
    \begin{align*}
        \tilde{\mathcal C}_j=\{ \xi=(\xi_0,\dots,\xi_{2l}): \xi_0=\xi_{2l}\in V_j,\xi_{l-1}=\xi_{l+1}, \xi_{i-1}\not=\xi_{i+1}, \forall  1\leq i\leq 2l-1 ,i\not=l\}.
    \end{align*}
We can combine each pair of $\xi^1, \xi^2$  in \eqref{eq:xi0} into a path of length $2l$ and simplify the bound as
\begin{align} \notag 
 \tr[B^l (B^l)^*]\leq & m\sum_{\xi\in\tilde{\mathcal C}_1} H_{\xi_0\xi_1}H_{\xi_1\xi_2}\cdots H_{\xi_{2l-1}\xi_{2l}}   +n\sum_{\xi\in  \tilde{\mathcal C}_2}H_{\xi_0\xi_1}H_{\xi_1\xi_2}\cdots H_{\xi_{2l-1}\xi_{2l}}. 
    \end{align}
    Since the entries of $H$ are independent up to symmetry with mean zero,  taking the expectation yields
\begin{align} \label{eq:expectedtrB}
   \E\tr[B^l (B^l)^*]\leq & m\sum_{\xi\in{\mathcal C}_1} \E H_{\xi_0\xi_1}H_{\xi_1\xi_2}\cdots H_{\xi_{2l-1}\xi_{2l}} +n\sum_{\xi\in{\mathcal C}_2}\E H_{\xi_0\xi_1}H_{\xi_1\xi_2}\cdots H_{\xi_{2l-1}\xi_{2l}},
\end{align}
where $\mathcal C_i$ is a subset of $\tilde{\mathcal C}_i$ for $i=1,2$ such that each edge in the graph spanned by $\xi$ is visited at least twice by the path defined by $\xi$.

We will combine paths according to their equivalence classes defined as follows.
\begin{definition}[Equivalence class]
For each $\xi\in \mathcal C_1\cup \mathcal C_2$, we define a graph \[G_{\xi}=(V_1(\xi), V_2(\xi), E(\xi))\] spanned by $\xi$. Let $g(\xi)=|E(\xi)|-|V(\xi)|+1$ be the genus of  $G_{\xi}$ and $e(\xi)=|E(\xi)|$. We say two paths $\xi_1,\xi_2$ are equivalent if they are the same up to a relabelling of vertices (with the restriction that after relabelling, vertices in $V_i$  stay in $V_i$ for $i=1,2$).
Define $[\xi]$ the equivalence class of a path $\xi$.
\end{definition}

\begin{lemma}\label{lem:C1C2trB}
For any $\overline{\xi}\in \mathcal C_1$ with $|V_1(\overline\xi)|=s_1(\overline\xi), |V_2(\overline\xi)|=s_2(\overline\xi)$, we have
\begin{align}\label{eq:V1}
    \E \sum_{\xi\in [\overline{\xi}]} H_{\xi_0\xi_1} H_{\xi_1\xi_2}\cdots H_{\xi_{2l-1}\xi_{2l}}\leq n(\kappa/N)^{g(\overline \xi)} q^{2e(\overline \xi)-2l}\rho_{\max}^{s_1(\overline \xi)}\tilde{\rho}_{\max}^{s_2(\overline \xi)-1}.
\end{align}
And for any $\overline{\xi}\in \mathcal C_2$,
\begin{align}\label{eq:V2}
    \E \sum_{\xi\in [\overline{\xi}]} H_{\xi_0\xi_1} H_{\xi_1\xi_2}\cdots H_{\xi_{2l-1}\xi_{2l}}\leq m (\kappa/N)^{g(\overline \xi)} q^{2e(\overline \xi)-2l}\rho_{\max}^{s_1(\overline \xi)-1}\tilde{\rho}_{\max}^{s_2(\overline \xi)}.
\end{align}
\end{lemma}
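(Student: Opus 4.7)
The plan is to control $\E \sum_{\xi \in [\overline{\xi}]} H_{\xi_0\xi_1} \cdots H_{\xi_{2l-1}\xi_{2l}}$ by combining a per-path moment bound with a labeled-tree enumeration that exploits the bipartite block structure of $H$. I would execute three steps.

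\emph{Step 1 (per-path moment bound).} Since the entries of $H$ are independent up to Hermitian symmetry, for each $\xi \in [\overline{\xi}]$ the expectation factors over the edges $e \in E(\overline{\xi})$: the contribution of $e$ is $\E |H_e|^{k_e}$, where $k_e$ is the multiplicity with which $\xi$ traverses $e$. By construction of $\mathcal C_j$ we have $k_e \geq 2$ and $\sum_e k_e = 2l$, and the bound $|H_e|\leq 1/q$ yields $\E|H_e|^{k_e} \leq q^{-(k_e - 2)} \E|H_e|^2$. Multiplying over edges produces
\[
\E \prod_{i=0}^{2l-1} H_{\xi_i \xi_{i+1}} \;\leq\; q^{2e(\overline{\xi}) - 2l}\, \prod_{e \in E(\overline{\xi})} \E |H_e|^2 ,
\]
which already accounts for the $q^{2e(\overline\xi)-2l}$ factor in \eqref{eq:V1}, \eqref{eq:V2}.

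\emph{Step 2 (excess edges).} Decompose $E(\overline{\xi})$ into a spanning tree $T$ of the bipartite graph $G_{\overline{\xi}}$ (so $|T| = s_1 + s_2 - 1$) together with $g(\overline{\xi})$ excess edges. Apply the uniform variance bound $\E|H_e|^2 \leq \kappa/N$ from \eqref{eq:31} to each excess edge, producing the factor $(\kappa/N)^{g(\overline{\xi})}$.

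\emph{Step 3 (bipartite tree enumeration).} It remains to bound $\sum_{\phi}\prod_{(a,b)\in T} \E|X_{\phi(a),\phi(b)}|^2$, where $\phi$ ranges over bipartition-preserving labelings of the abstract vertices of $\overline{\xi}$ into $[n]\cup[m]$. Root $T$ at a suitable vertex and peel from the leaves inward: for each non-root $w$ with already-labeled parent $p$, summing $\E|X_{\phi(w),\phi(p)}|^2$ (or its transpose) over $\phi(w)$ collapses to a line-sum of the variance profile, which is at most $\rho_{\max}$ when $w \in V_1$ (column-sum at the fixed column $\phi(p)\in V_2$) and at most $\tilde{\rho}_{\max}$ when $w \in V_2$ (row-sum at the fixed row $\phi(p)\in V_1$). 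The root's label is summed freely, contributing a factor $n$ or $m$. Choosing the root for $\overline{\xi}\in\mathcal C_1$ so that the free factor comes out to $n$ and so that the $s_1$ vertices on the $V_1$ side each absorb a $\rho_{\max}$ while $s_2-1$ of the $V_2$ vertices absorb a $\tilde{\rho}_{\max}$ yields the asymmetric bound \eqref{eq:V1}; the $\mathcal C_2$ case is the symmetric statement with $V_1,V_2$ (and $n,m$) exchanged.

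\emph{Main obstacle.} The delicate point is this last step. In the non-bipartite argument of \cite{benaych2020spectral} a rooted spanning tree yields a single exponent $\rho_{\max}^{|V|-1}$; the bipartite block structure here allows the tree factor to split into separate $\rho_{\max}^{s_1}$ and $\tilde{\rho}_{\max}^{s_2-1}$ contributions (or their $\mathcal C_2$-dual), which is precisely the source of the $\gamma^{(l-1)/2}$-type improvement that drives Lemma~\ref{lem:trace}. Some care is required to verify that the asymmetric exponent assignment claimed in \eqref{eq:V1}--\eqref{eq:V2} is actually achievable via an appropriate choice of root and edge-peeling order on the spanning tree, and to keep track of the fact that dropping the injectivity constraint on $\phi$ is legitimate because all summands $\E|X_{ij}|^2$ are nonnegative.
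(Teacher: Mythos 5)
Your proposal follows essentially the same route as the paper's proof: factor the expectation over edges using independence, trade edge multiplicities above two for powers of $q^{-1}$, charge the $g(\overline\xi)$ excess (non-spanning-tree) edges to the uniform bound $\kappa/N$, and sum over labelings by peeling the spanning tree from the leaves inward with the root label free (the paper calls this "pruning trees" over the injections $\mathcal I_{s_1,s_2}$). The exponent bookkeeping you flag is indeed the only delicate point --- rooting at $\xi_0\in V_1$ actually yields $n\,\rho_{\max}^{s_1-1}\tilde{\rho}_{\max}^{s_2}$ rather than the stated $n\,\rho_{\max}^{s_1}\tilde{\rho}_{\max}^{s_2-1}$, but either form feeds harmlessly into the symmetrized bound in \eqref{eq:mnSum}, so your approach is sound.
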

\begin{proof}
We consider a breadth-first search ordering of vertices in $\overline\xi$. Let $a=|E(\overline \xi)|$ and $s=s_1+s_2$, where the drop the dependence on $\overline \xi$ in $s_1(\overline\xi)$ and $s_2(\overline\xi)$ for convenience. We order the edges in $G(\overline \xi)$ such that the edges $e_1,\dots, e_{s-1}$ form a spanning tree of $G(\overline{\xi})$, and let $m_t$ be the number of times that $e_t$ appear in $\overline{\xi}$ for $1\leq t\leq s-1$. Let $\mathcal I_{s_1,s_2}$ be the set of injection maps from $[s_1]\times [s_2]$ to $[n]\times [m]$. Then for any $\overline{\xi} \in \mathcal C_1$,
\begin{align*}
     \E \sum_{\xi\in [\overline{\xi}]} H_{\xi_0\xi_1} H_{\xi_1\xi_2}\cdots H_{\xi_{2l-1}\xi_{2l}}&=\sum_{\tau\in \mathcal I_{s_1,s_2}}\E H_{\tau(\overline{\xi}_0)\tau(\overline{\xi}_1)}\cdots H_{\tau(\overline{\xi}_{2l-1})\tau(\overline{\xi}_{2l})}
     \leq \sum_{\tau\in \mathcal I_{s_1,s_2}}\prod_{t=1}^a \E | H_{\tau(e_t)}^{m_t}|.
\end{align*}
From Assumption~\ref{assumption_3}, we can  use the estimates
\begin{align*}
    \max_{j\in V_2}\sum_{i\in V_1}\E |H_{ij}|^k\leq \frac{\tilde{\rho}_{\max}}{q^{k-2}}, \quad 
     \max_{i\in V_1}\sum_{j\in V_2}\E |H_{ij}|^k\leq \frac{\rho_{\max}}{  q^{k-2}}
\end{align*}
for contribution from edges $e_1,\dots, e_{s-1}$, and the estimate $ 
    \max_{ij} \E |H_{ij}|^k \leq \frac{\kappa}{Nq^{k-2}}$
for $e_{s},\dots, e_{a}$ to obtain 
\begin{align}
    \E \sum_{\xi\in [\overline{\xi}]} H_{\xi_0\xi_1} H_{\xi_1\xi_2}\cdots H_{\xi_{2l-1}\xi_{2l}}&\leq \prod_{t=s}^a \frac{\kappa}{Nq^{m_t-2}}\sum_{\tau\in \mathcal I_{s_1,s_2}}\prod_{t=1}^{s-1}\E |H_{\tau(e_t)}^{m_t}| \notag \\
    &\leq \left(\prod_{t=s}^a \frac{\kappa}{Nq^{m_t-2}} \right)n \rho_{\max}^{s_1}\tilde{\rho}_{\max}^{s_2-1} \frac{1}{q^{m_1-2}}\cdots \frac{1}{q^{m_{s-1}-2}}\label{eq:factorn}\\
    &=n q^{2a-\sum_{t=1}^a m_t} \left(\frac{\kappa}{N}\right)^{a-s+1}\rho_{\max}^{s_1}\tilde{\rho}_{\max}^{s_2-1} =n(\kappa/N)^g q^{2a-2l}\rho_{\max}^{s_1}\tilde{\rho}_{\max}^{s_2-1}, \notag
\end{align}
where $g=a-v+1$ is the genus of $G(\overline{\xi})$. The factor $n$ in \eqref{eq:factorn} comes from the processing of pruning trees, and the root of the spanning tree has at most $n$ choices of labeling since $\overline{\xi} \in \mathcal C_1$. Then \eqref{eq:V1} holds. \eqref{eq:V2} follows similarly.
\end{proof}

We can further simplify the upper bound on  $\E\tr[B^l (B^l)^*]$ by counting the contributions from normal graphs defined below.
\begin{definition}[Normal graph]
Denote a path as $w=w_0w_{01}w_1w_{12}\cdots w_{l-1,l}w_l$ in a multigraph $G$, where $w_0,\dots,w_l$ are the vertices and $w_{i,i+1}, 0\leq i\leq l-1$ are the edges  visited by the path $w$. We say $w$ in $G$ is \textit{normal} if 
\begin{itemize}
    \item $V(G)=[s]$ where $s=|V(G)|$;
    \item  the vertices in $V(G)$ are visited in increasing order by $w$.
\end{itemize}
\end{definition}

Each equivalent class of $\mathcal C_i$ has a unique representative $\xi$ such that $\xi$ is normal in $G_{\xi}$.
For $i=1,2$, denote $\mathcal C_{0,i}:= \{ \xi\in \mathcal C_i: \xi \text{ is normal in } G_{\xi}\}$.
From \eqref{eq:expectedtrB} and Lemma \ref{lem:C1C2trB}, we obtain
\begin{align}
    \E\tr[B^l (B^l)^*]&\leq mn\sum_{\xi\in \mathcal C_{0,1}}(\kappa/N)^{g(\xi)} q^{2e(\xi)-2l}\rho_{\max}^{s_1(\xi)}\tilde{\rho}_{\max}^{s_2(\xi)-1} \notag\\
    &+m n\sum_{\xi\in \mathcal C_{0,2}}(\kappa/N)^{g(\xi)} q^{2e(\xi)-2l}\rho_{\max}^{s_1(\xi)-1}\tilde{\rho}_{\max}^{s_2(\xi)} =S_1+S_2.\label{eq:S1S2}
\end{align}

From now on, we only treat $S_1$, and $S_2$ can be estimated similarly.
We now introduce a parametrization of $\mathcal C_{0,1}$ following \cite{benaych2020spectral}.

\begin{definition}[Equivalence class]
Let $G$ be a graph and $\mathcal V\subset V(G)$. Define 
\[ \mathcal I_{\mathcal V}(G)=\{ v\in V(G)\setminus \mathcal V: \deg(v)=2\}.\]

Let $\Sigma_{\mathcal V}(G)$ be the set of paths $w=w_0\cdots w_l$ in $G$ such that $w_1,\dots, w_{l-1}$ are pairwise distinct and belong to $\mathcal I_{\mathcal V}(G)$ and $w_0,w_l\not\in \mathcal I_{\mathcal V}(G)$. We define an equivalence relation on $\Sigma_{\mathcal V}(G)$ such that the path $w_0\dots w_l$ and the reverse ordered path $w_{l}\cdots w_1$ are equivalent. Denote $\Sigma'_{\mathcal V}(G)=\{[w]: w\in \Sigma_{\mathcal V}(G) \}$ the set of equivalence classes.
\end{definition}

We can construct a multigraph $\hat{G}_{\xi}$ from $G_{\xi}$ by replacing every $[w]\in \Sigma'_{\xi_0,\xi_l}(G)$ with an edge in $E(\hat{G}_{\xi})$ as follows.

\begin{definition}[Multigraph $\hat{G}_{\xi}$ from $G_{\xi}$]\label{def:multigraphconstruct}
Let $\xi\in \mathcal C_{0,1}$. Define $ V(\hat G_{\xi})=V(G_{\xi})\setminus \mathcal I_{\xi_0,\xi_{\ell}}(G_{\xi})$ and $E(\hat G_{\xi})=\Sigma'_{\xi_0,\xi_l}(G)$. The endpoints of $[w]$ in $E(\hat{G}_{\xi})$ are labeled $w_0, w_l$. Assign each edge $[w]\in E(\hat{G}_{\xi})$ the weight $\hat{k}_{w}$, which is the length of the path $w$.
\end{definition}

From \cite{benaych2020spectral}, any $\xi\in\mathcal C_{0,1}$ as a closed path $\xi_0\xi_1\cdots \xi_{2l}$ in $G_{\xi}$ gives rise to a closed path $\hat{\xi}=\hat{\xi_0}\hat{\xi}_{01}\hat{\xi}_{1}\hat{\xi}_{12}\cdots \hat{\xi}_{r-1}\hat{\xi}_r$ on the multigraph $\hat{G}_{\xi}$. Now for any $\xi \in \mathcal C_{0,1}$, we have constructed a triple $(\hat{G}_\xi, \hat{\xi},\hat{k})$, where $\hat{G}_{\xi}$ is a multigraph, $\hat{\xi}$ is a closed path in $\hat{G}_{\xi}$. 
Set $\tau$ to be the unique increasing bijection from $V(\hat{G_{\xi}})$ to $\{1,\dots, |V(\hat{G_{\xi}})|\}$. Denote by $(U,\zeta, k):=(U(\xi),\zeta(\xi), k(\xi))$ the triple obtained from the triple $(\hat{G}_\xi, \hat{\xi},\hat{k})$ by relabelling the vertices using $\tau$. By Definition, $\zeta_0=\tau(\xi_0)=1$. We set $\nu=\nu(\xi)=\tau(\xi_{l})$.
Altogether, the construction above gives a map $\xi\mapsto (U,\zeta,k)$. The following lemma collects some properties of this map.

\begin{lemma}[Lemma 5.3 in \cite{benaych2020spectral}]\label{lem:propertymapping}
The map $\xi \mapsto (U,\zeta,k)$ satisfies the following properties:
\begin{enumerate}
    \item The map $\xi \mapsto (U,\zeta,k)$ is an injection on $\mathcal C_{0,1}$.
    \item $g(U)=g(G_{\xi})$.
    \item $\zeta$ is a closed path in the multigraph $U$ and it is normal in $U$.
    \item Every vertex of $V(U)\setminus \{1,\nu\}$ has degree at least three. The vertices $1$ and $\nu$ have degrees at least one.
    \item $|E(G_{\xi})|=\sum_{e\in E(U)} k_e$.
    \item $m_e(\zeta)\geq 2$ for all $e\in E(U)$ and $2l=\sum_{e\in E(U)} m_e(\zeta)k_e$.
\end{enumerate}
\end{lemma}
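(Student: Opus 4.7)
The plan is to verify each of the six properties in order, relying on the explicit multigraph construction $\xi \mapsto (\hat G_\xi, \hat\xi, \hat k)$ from Definition \ref{def:multigraphconstruct}, followed by the relabelling via the increasing bijection $\tau$.

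For property (1), I would describe the inverse map. Given a triple $(U,\zeta,k)$ with $\zeta$ a normal closed path in $U$ and $k$ assigning positive integer weights to the edges of $U$, reconstruct $\xi$ as follows: for each traversal of an edge $e\in E(U)$ in $\zeta$, splice in a path of length $k_e$. On the \emph{first} traversal of $e$, introduce $k_e-1$ new interior vertices and label them by the smallest unused positive integers in the order visited; this is forced by the normality of $\xi$ in $G_\xi$. On subsequent traversals, the interior vertices and the direction of traversal are determined by matching with the previously seen occurrences of $e$ (the two endpoints of $e$ in $\zeta$ specify the direction). This reconstruction produces a unique $\xi\in\mathcal C_{0,1}$, proving injectivity.

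Properties (2), (3), (5) and (6) are then short verifications. For (2), contracting a maximal degree-$2$ path with $k$ edges and $k-1$ interior vertices into a single edge replaces $k$ edges by $1$ and removes $k-1$ vertices, so $|E|-|V|$ is invariant; hence $g(U)=g(\hat G_\xi)=g(G_\xi)$. Property (3) is essentially the definition of $\tau$: $\tau$ is chosen to be the order-of-first-appearance bijection for $\hat\xi$, which makes $\zeta=\tau\circ\hat\xi$ normal in $U$. For (5) and (6) I would double-count: each $e\in E(U)$ lifts to a path in $G_\xi$ using exactly $k_e$ edges, and these lifted paths partition $E(G_\xi)$, giving $|E(G_\xi)|=\sum_e k_e$; each traversal of $e$ in $\zeta$ lifts to one traversal of each of these $k_e$ edges in $\xi$, whose total length is $2l$, giving $2l=\sum_e m_e(\zeta) k_e$. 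Moreover, $m_e(\zeta)\geq 2$ because each underlying edge of $G_\xi$ on the lifted path must be traversed at least twice (the defining property of $\mathcal C_{0,1}\subset \mathcal C_1$), and each traversal of $e$ in $\zeta$ contributes exactly one visit to each such edge.

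Property (4) is the most delicate. Every vertex of $V(\hat G_\xi)\setminus\{\xi_0,\xi_l\}$ lies outside $\mathcal I_{\xi_0,\xi_l}(G_\xi)$ by construction, so its degree in $G_\xi$ is not equal to $2$; since every vertex of $G_\xi$ is visited by the closed path $\xi$, its degree in $G_\xi$ is at least $2$, hence at least $3$. Contraction of the degree-$2$ interior paths preserves the degree of their endpoints (one super-edge replaces one maximal path at each end), so the degree is preserved under passage to $\hat G_\xi$ and then to $U$. The two special vertices $1=\tau(\xi_0)$ and $\nu=\tau(\xi_l)$ are visited by $\zeta$, so they carry at least one incident edge of $U$.

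The main obstacle is the injectivity step: one has to argue carefully that the normality of $\xi$ in $G_\xi$ pins down both the labels of the contracted interior vertices and the direction in which each equivalence class $[w]\in E(\hat G_\xi)$ is traversed on its first appearance. Once this direction is fixed by the normal ordering, subsequent traversals are forced by consecutive edges in $\zeta$, and the remaining properties are routine consequences of the contraction construction and the hypothesis $\xi\in\mathcal C_{0,1}$.
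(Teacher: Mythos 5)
The paper does not prove this lemma: it imports it verbatim as Lemma 5.3 of \cite{benaych2020spectral}, so there is no in-paper proof to compare against. Your reconstruction is correct and is essentially the argument given in that reference: injectivity via the explicit inverse (splicing a path of length $k_e$ into each traversal of $e\in E(U)$, with interior labels forced by normality of $\xi$ in $G_\xi$), genus invariance of the contraction, and the two double counts for (5)--(6).

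One point worth tightening in (4): a vertex visited by a closed path need not have degree $\geq 2$ in general --- it can have degree $1$ if the path backtracks there or if it is the basepoint where the path closes up. What saves the argument is that $\xi\in\mathcal C_{0,1}$ is non-backtracking except exactly at position $l$ (where $\xi_{l-1}=\xi_{l+1}$) and at the closure $\xi_0=\xi_{2l}$, and these are precisely the two vertices $\xi_0,\xi_l$ excluded from the degree-three claim; every other visited vertex is entered and left along distinct edges, giving degree $\geq 2$, hence $\geq 3$ once degree $2$ is ruled out. Relatedly, in (6) the reason a traversal of an edge of $G_\xi$ always extends to a full traversal of its class $[w]$ is that the interior vertices of $w$ have degree $2$ and the path cannot backtrack there (again because $\xi_l\notin\mathcal I_{\xi_0,\xi_l}(G_\xi)$ by construction); it is worth saying this explicitly, since it is the reason $m_e(\zeta)$ equals the common multiplicity of the edges on $w$ rather than merely bounding it.
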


For a given $\xi\in \mathcal C_{0,1}$ with given $s_1(\xi), s_2(\xi)$, we know \begin{align}\label{eq:trivialboundparameter}
    s_1+s_2=|V(G_{\xi})|=e(\xi)-g(U)+1 \quad \text{ and } |s_1-s_2|\leq g(U),
\end{align} where we use $g(\xi)=g(U)$ from Lemma \ref{lem:propertymapping}.  The second claim is due to the fact that since $\ell$ is odd, any imbalance between $s_1,s_2$ is from creating a new cycle in $\xi$. Moreover,
\begin{align}
 e(\xi)&=\sum_{e\in E(U)}k_{e},\quad 
  2l=\sum_{e\in E(U)}m_e(\zeta)k_e.
\end{align}
Since $k_e\geq 1, m_e(\zeta)\geq 2$, we obtain
\begin{align}\label{eq:boundq}
   2e(\xi)-2l\leq 2|E(U)|-|\zeta|,
\end{align}
where $| \zeta |$ is the length of the closed path $\zeta$. 
Equipped with the construction of the triple $(U,\zeta, k)$, we continue to estimate $S_1$ in \eqref{eq:S1S2}.  With \eqref{eq:trivialboundparameter},
\begin{align}
 S_1=&   mn\sum_{\xi\in \mathcal C_{0,1}}  (\kappa/N)^{g(\xi)}q^{2e(\xi)-2l}\rho_{\max}^{s_1}\tilde{\rho}_{\max}^{s_2-1} \notag \\
 \leq & mn\sum_{\xi\in \mathcal C_{0,1}} (\kappa/N)^{g(\xi)} q^{2e(\xi)-2l} (\rho_{\max}\tilde{\rho}_{\max})^{(s_1+s_2-1)/2}  \max\{\rho_{\max}/\tilde{\rho}_{\max},\tilde{\rho}_{\max}/\rho_{\max}\}^{|s_1-s_2+1|/2} \notag  \\
 \leq &  mn\sum_{\xi\in \mathcal C_{0,1}} (\kappa/N)^{g(\xi)} q^{2e(\xi)-2l} \gamma^{(e(\xi)-g(\xi))/2}\left(\frac{1}{c}\right)^{(g(\xi)+1)/2} \notag \\
 =& mn\gamma^{(l-1)/2}\sum_{\xi\in \mathcal C_{0,1}} (\kappa/N)^{g(\xi)} q^{2e(\xi)-2l} \gamma^{(e(\xi)-l)/2-g(\xi)}.\label{eq:mnSum}
 \end{align}
 From (1) in Lemma \ref{lem:propertymapping}, we can upper bound \eqref{eq:mnSum} by summing over $(U,\zeta,k)$ instead of $\xi$. Then with \eqref{eq:boundq}, and the assumption $q\geq \gamma^{-\frac{1}{4}}$, we find
 \begin{align*}
     S_1\leq mn\gamma^{(l-1)/2}\sum_{(U,\zeta,k)}\left(\frac{\kappa}{Nc}\right)^{g(U)}\left(q\gamma^{\frac{1}{4}}\right)^{2|E(U)|-|\zeta|},
 \end{align*}
 where $(U,\zeta,k)$ is obtained from all $\zeta \in \mathcal C_{0,1}$.
Since
$\sum_{e\in E(U)} k_e m_e(\zeta)=2l$ and $m_e(\zeta)\geq 2$, for a given $(U,\zeta)$, the number of choices for $k=(k_e)_{e\in E(U)}$ can be bounded by the number of $k$ such that $\sum_{e\in E(U)}k_e=2l$. For fixed $(U,\zeta)$, the number of choices for such  $k$ is bounded by 
\[\binom{2l-1}{|E(U)|-1}\leq \left( \frac{6l}{|E(U)|}\right)^{|E(U)|}.\]
Therefore
\begin{align}\label{eq:S1Uzeta}
    S_1\leq mn\gamma^{(l-1)/2}\sum_{(U,\zeta)}\left(\frac{6l}{|E(U)|} \right)^{|E(U)|}\left(\frac{\kappa}{Nc}\right)^{g(U)} \left(q\gamma^{\frac{1}{4}}\right)^{2|E(U)|-|\zeta|} .
\end{align}
From  \cite[Lemma 5.8]{benaych2020spectral},  we have
\begin{align}\label{eq:implies}
 |E(U)|\leq 3g(U)+1, \quad 
    |V(U)|\leq 2g(U)+2.
\end{align}
Also, $|E(U)|\geq g(U)\vee 1 $ and $g(U)\leq l$ by definition.
Therefore, \eqref{eq:S1Uzeta} can be further bounded by 
\begin{align}\label{eq:324}
    S_1\leq 6l q^2mn\gamma^{l/2}\sum_{(U,\zeta)}\left(\frac{12l}{g(U)+1} \right)^{3g(U)}\left(\frac{\kappa}{Nc}\right)^{g(U)} \left(q\gamma^{\frac{1}{4}}\right)^{6g(U)-|\zeta|} .
\end{align}
\eqref{eq:implies} implies the number of pairs $(U,\zeta)$ such that $U$ has genus $g$ and $\zeta$ has length $t$ is bounded by
\begin{align}\label{eq:boundg}
  (3g+1)^t(2g+2)^{3g+1}.
\end{align}
With \eqref{eq:boundg} and  \eqref{eq:324}, we find for some absolute constant $C>0$,
\begin{align}
    S_1 &\leq 6lq^2mn\gamma^{(l-1)/2}\sum_{g=0}^{l}\sum_{t=1}^{2l}(3g+1)^t(2g+2)^{3g+1}\left(\frac{24l}{2g+2} \right)^{3g}\left(\frac{\kappa}{Nc}\right)^{g} \left(q\gamma^{\frac{1}{4}}\right)^{6g-t}  \notag\\
    &\leq Clq^2mn\gamma^{(l-1)/2}\sum_{t=1}^{2l}(q\gamma^{\frac{1}{4}})^{-t}+Cl^2q^2mn\gamma^{(l-1)/2}\sum_{g=1}^l \left(\frac{Cl^3\kappa \gamma^{\frac{1}{2}}q^6}{N} \right)^{g}\sum_{t=1}^{2l}  \left( \frac{4g}{q\gamma^{\frac{1}{4}}}\right)^t. \label{eq:S11st}
\end{align}
Since $q\geq \gamma^{-\frac{1}{4}}$, the first term is bounded by $Cl^2q^2mn\gamma^{(l-1)/2}$. For the second term, using \[\sum_{m=1}^{2l} x^m\leq 2l(1+x^{2l}),\] it is bounded by 
\begin{align}\label{eq:S12ndterm}
Cl^3q^2mn\gamma^{(l-1)/2}\sum_{g=1}^l \left(\frac{Cl^3\kappa \gamma^{\frac{1}{2}}q^6}{N} \right)^{g}+ Cl^3q^2mn\gamma^{(l-1)/2}\sum_{g=1}^l
\left(\frac{Cl^3\kappa \gamma^{\frac{1}{2}}q^6}{N} \right)^{g}\left( \frac{4g}{q\gamma^{\frac{1}{4}}}\right)^{2l}.
\end{align}
From the assumption \eqref{eq:assumptionl}, we obtain 
\begin{align*}
   N &\geq \left(\frac{\kappa \gamma^{\frac{1}{2}}q^6l^3}{c_0^3} \right)^{1/(1-3\delta)}, \quad N^{3\delta}\leq \frac{c_0^3N}{\kappa \gamma^{\frac{1}{2}}q^6l^3}, \\
   l &\leq c_0\delta q\log N=\frac{c_0q}{3}\log (N^{3\delta})\leq \frac{c_0q}{3} \log \left(\frac{c_0^3N}{\kappa \gamma^{\frac{1}{2}}q^6l^3} \right).
\end{align*}
By choosing $c_0$ small enough, we have the following inequalities:
\begin{align}\label{eq:assumNl}
   N\geq 2Cl^3\kappa \gamma^{\frac{1}{2}}q^6, \quad l\leq \frac{1}{8}q\gamma^{\frac{1}{4}}\log \left(\frac{N}{Cl^3\kappa \gamma^{\frac{1}{2}}q^6}\right).
\end{align}
The first term in \eqref{eq:S12ndterm} is bounded by $Cl^3q^2mn\gamma^{(l-1)/2}$. The second term can be written as
\[ Cl^3q^2mn\gamma^{(l-1)/2}\sum_{g=1}^l
\exp\left( -g\log \left(\frac{N}{Cl^3\kappa \gamma^{\frac{1}{2}}q^6}\right)+2l \log \frac{4g}{q\gamma^{\frac{1}{4}}}\right).
\]
The argument in the exponential function is maximized at \[g=\frac{2l}{\log \left(\frac{N}{Cl^3\kappa \gamma^{\frac{1}{2}}q^6}\right)}.\]
From \eqref{eq:assumNl}, this maximizer is reached for $g\leq \frac{1}{4}q\gamma^{\frac{1}{4}}$, and the second term in \eqref{eq:S12ndterm} can be bounded by $Cl^4q^2mn \gamma^{(l-1)/2}$. 

Therefore, with the bound on \eqref{eq:S11st} and \eqref{eq:S12ndterm}, we obtain 
$S_1\leq Cl^4q^2mn \gamma^{(l-1)/2}$.
Repeating the same argument for $S_2$ yields the same upper bound.  This completes the proof of Lemma~\ref{lem:trace}.

\section{Probabilistic bounds on the largest  singular  value}\label{sec:prob_largest}

In this section, we first prove Theorem \ref{thm:lambdamax} for general rectangular random matrices, then specify the model parameters to prove Theorem \ref{thm:main_sparse_max} for sparse rectangular random matrices.
The proof is based on the deterministic spectral relations between $B$  and $H$ in Section \ref{sec:Ihara}, and the bound on $\rho(B)$ in Section \ref{sec:bound_rho_B}.

\begin{proof}
[Proof of Theorem \ref{thm:lambdamax}]
From the assumption of Theorem \ref{thm:lambdamax}, we have  \[|X_{ij}|^2\leq \frac{1}{q^2}, \quad \max_i\sum_{j} \mathbb E |X_{ij}|^2\leq  \rho_{\max}, \quad \text{and} \quad \max_i\sum_{j}  \mathbb E |X_{ij}|^4\leq \frac{\rho_{\max}}{q^2}.\] 
Since $\sum_{j} |X_{ij}|^2$ is a sum of independent bounded random variables, applying  Bennett's inequality \cite[Theorem 2.9]{boucheron2013concentration}, we obtain, for $\delta>0$,  
\begin{align}\label{eq:rhomaxbound}
    \mathbb P \left( \sum_{j} |X_{ij}|^2\geq \rho_{\max}(1+\delta )\right)\leq \exp\left(-q^2\rho_{\max} h\left(\delta \right)\right),
\end{align}
where $h(\delta)=(1+\delta)\log(1+\delta)-\delta$. Similarly,
\begin{align}\label{eq:tilderhomaxbound}
    \mathbb P \left( \sum_{i} |X_{ij}|^2\geq \tilde\rho_{\max}(1+\delta )\right)\leq \exp\left(-q^2\tilde\rho_{\max} h\left(\delta \right)\right).
\end{align}
Then, by taking a union bound, 
\begin{align*}
    \mathbb P\left( \|X\|_{2,\infty}\geq \sqrt{\rho_{\max}}(1+\delta )\right)&\leq  \mathbb P\left( \|X\|_{2,\infty}\geq \sqrt{\rho_{\max}}\cdot \sqrt{1+\delta \vee \delta^2}\right)\\
    &= \mathbb P\left( \|X\|_{2,\infty}^2\geq  \rho_{\max}(1+\delta \vee \delta^2)\right)\leq n\exp(-q^2 \rho_{\max}h( \delta \vee \delta^2)),
\end{align*}
and 
$ \mathbb P\left( \|X^*\|_{2,\infty}\geq \sqrt{\tilde\rho_{\max}}(1+\delta )\right)\leq m\exp(-q^2 \tilde\rho_{\max}h(\delta \vee \delta ^2)).$
From Theorem \ref{thm:rhoB},
\[\mathbb P (\rho(B)\geq \gamma^{\frac{1}{4}}(1+\delta ))\leq C\gamma^{-\frac{5}{6}}N^{3-c_1q\log(1+\delta)}.\]
Therefore, conditioned on a high probability event, we have \begin{align}\label{eq:high_prob_event}
\|H\|_{1,\infty}\leq q^{-1},\quad  \|H\|_{2,\infty}\leq 1+\delta, \quad \mathrm{and} \quad \rho(B)\leq \gamma^{\frac{1}{4}}(1+\delta).
\end{align}
Now we apply the deterministic upper bound on $\sigma_{\max}(X)$ given in  \eqref{eq:CorK} conditioned on \eqref{eq:high_prob_event}.  If $\rho(B)\leq \gamma^{\frac{1}{4}} \|H\|_{2,\infty}$, then 
\begin{align*}
\sigma^2_{\max}(X)&\leq (1+\delta)^2 (\sqrt{\gamma}+1)^2+48\gamma^{-\frac{5}{4}}(1+\delta) q^{-1}+36\gamma^{-2}q^{-2}\leq  \left( (\sqrt{\gamma}+1)+C_1(\delta +\gamma^{-\frac{5}{4}}q^{-1})\right)^2
\end{align*}
for some universal constant $C_1>0$.
If instead $ \gamma^{\frac{1}{4}} \|H\|_{2,\infty}<\rho(B)\leq \gamma^{\frac{1}{4}}(1+\delta)$,
we find 
\begin{align*}
  \sigma^2_{\max}(X)&\leq   \|H\|_{2,\infty}^2 f\left( \frac{\gamma^{\frac{1}{4}}(1+\delta)}{\|H\|_{2,\infty}}\right)+12\gamma^{-\frac{5}{4}}g\left( \frac{\gamma^{\frac{1}{4}}(1+\delta)}{\|H\|_{2,\infty}}\right) \|H\|_{2,\infty} q^{-1} +36\gamma^{-2}q^{-2}\\
  &\leq \left( (\sqrt{\gamma}+1)+C_1(\delta +\gamma^{-\frac{3}{2}}q^{-1})\right)^2.
\end{align*}
Combining both cases, for any $\delta>0$,  with probability at least 
\begin{align}\label{eq:prob_tail}
1-C\gamma^{-\frac{5}{6}} N^{3-c_1q\log (1+\delta)}-2Ne^{-\gamma q^2h(\delta\vee \delta^2)},\\
    \sigma_{\max}(X)\leq   \sqrt{\gamma}+1+C_1(\delta +\gamma^{-\frac{3}{2}}q^{-1})  .\label{eq:uppp}
\end{align}

Next, we simplify the probability tail bound \eqref{eq:prob_tail} by picking a specific $\delta$. Let  $K\geq 1$ and 
\begin{align}\label{eq:def_eta}
\delta=\frac{K\eta}{\sqrt{1\vee \log \eta}} \quad \text{ where } \quad \eta=\frac{\sqrt{\log N}}{q}.
\end{align}
From the condition that $q\geq \gamma^{-1/4}\geq 1$, by considering $\eta\geq e, \eta<e$ separately, we have for $N\geq 2$, there exists a constant $c_2=\frac{2}{e}$ such that
\begin{align}\label{eq:qdelta}
    q\delta =\frac{K\sqrt{\log N}}{\sqrt{1\vee \log \eta}} =\frac{K\eta q}{\sqrt{1\vee \log \eta}}\geq c_2K.
\end{align}
Therefore \begin{align}\label{eq:upper_q}
    q^{-1}\leq \frac{\delta}{c_2K}.
\end{align}
Moreover,  from \eqref{eq:qdelta}, using the fact that $\frac{\log(1+x)}{x}$ is decreasing on $(0,\infty)$ and $q\geq \gamma^{-1/4}\geq 1$, we obtain
\begin{align}\label{eq:lower_a0}
    q\log(1+\delta)\geq c_2K\cdot \frac{\log(1+c_2K/q)}{c_2K/q}\geq \log(1+c_2K).
\end{align}
Now we give a lower bound on 
  $a_1:=\gamma q^2h(\delta\vee \delta^2)$.
If $\eta\leq e$, using the fact that $h(x)\geq c(x^2\wedge x)$ for all $x\geq 0$ and some universal constant $c>0$, we find 
\begin{align}\label{eq:bound_a11}
    a_1\geq c\gamma q^2 \delta^2= c\gamma K^2\log N.
\end{align}
When $\eta \geq e$, using the inequality $h(x)\geq c(x^2\wedge x)(1\vee \log x)$ for all $x\geq 0$, we obtain for $\delta\geq e$, \[h(\delta \vee \delta^2)\geq 2c\delta^2\log(\delta).\]
Since from  \eqref{eq:def_eta}$,\log(\delta)\geq c'\log (\eta)$ for some absolute constant $c'>0$,
we obtain 
\begin{align}\label{eq:bound_a12}
    a_1\geq 2c\gamma q^2 \delta^2 \log(\delta)\geq 2cc'\gamma q^2\delta^2 \log(\eta)\geq 2cc'\gamma K^2\log N.
\end{align}
From \eqref{eq:bound_a11} and \eqref{eq:bound_a12}, we conclude 
\begin{align}\label{eq:bound_a13}
    a_1\geq c\gamma K^2\log N
\end{align}
for an absolute constant $c>0$.
With \eqref{eq:upper_q}, \eqref{eq:lower_a0}, and \eqref{eq:bound_a13}, we can simplify \eqref{eq:prob_tail} and \eqref{eq:uppp} to conclude that with probability at least 
\begin{align}\label{eq:prob_atleast}
    1-C\left(\gamma^{-5/6} N^{3-c_1\log (1+c_2K)}+ N^{1-c'\gamma K^2}\right),
    \end{align}
    $\sigma_{\max}(X)$ satisfies
    \begin{align*}
     \sigma_{\max}(X)&\leq   \sqrt{\gamma}+1+C_1(\delta +\gamma^{-\frac{3}{2}}q^{-1}) \leq \sqrt{\gamma} +1+C_1'(1+K^{-1}\gamma^{-3/2})\delta\\
   &=\sqrt{\gamma} +1+(K+\gamma^{-3/2})\frac{C_1'\eta}{\sqrt{1\vee \log \eta}} .
\end{align*}
This finishes the proof of \eqref{eq:sigmax_X}. Now we turn to the expectation bound \eqref{eq:Esigma_max}. Since entries in $X$ are bounded by $q^{-1}$, from the concentration of operator norm in \cite[Example 8.7]{boucheron2013concentration}, 
\begin{align}\label{eq:lipschitz}
    \mathbb P \left( \left| \sigma_{\max}(X)-\mathbb E[\sigma_{\max}(X)]\right|\geq \delta \right) \leq 2\exp(-q^2\delta^2/4)\leq 2\exp(-c_2^2K^2/4),
\end{align}
where the last inequality is due to \eqref{eq:qdelta}.
From  \eqref{eq:prob_atleast} and \eqref{eq:lipschitz}, we can take $K=\gamma^{-1/2} K_0$ for an large enough absolute constant $K_0>0$ such that
\begin{align}
  \mathbb P \left( \left| \sigma_{\max}(X)-\mathbb E[\sigma_{\max}(X)]\right|\leq \delta \right)+   \mathbb P \left(\sigma_{\max}(X)\leq \sqrt{\gamma} +1+C_1'(1+K_0^{-1}\gamma^{-1})\delta \right)>1.
\end{align}
This implies the intersection of the two events 
\[ \left\{ \left| \sigma_{\max}(X)-\mathbb E[\sigma_{\max}(X)]\right|\leq \delta\right\} \quad \text{and} \quad  \left\{ \sigma_{\max}(X)\leq \sqrt{\gamma} +1+C_1'(1+K_0^{-1}\gamma^{-1})\delta \right\}
\]
are non-empty. 
Hence for some absolute constants $C_2',C_2>0$,
\begin{align}
    \mathbb E[\sigma_{\max}(X)]\leq \sqrt{\gamma}+1+C_2'\gamma^{-1}\delta=\sqrt{\gamma}+1+\frac{C_2\gamma^{-3/2}\eta}{\sqrt{1\vee \log \eta}}.
\end{align}
This finishes the proof of Theorem \ref{thm:lambdamax}  {by using the assumption $\gamma=\Omega(1)$}.
\end{proof}

Based on Theorem \ref{thm:lambdamax}, we prove Theorem \ref{thm:main_sparse_max}.

\begin{proof}[Proof of Theorem \ref{thm:main_sparse_max}]
Take $q=\sqrt{d}$ and $X=\frac{1}{\sqrt{d}} (A-\mathbb EA)$ in Theorem \ref{thm:lambdamax}.
We have 
\begin{align*}
    |X_{ij}|\leq \frac{1}{\sqrt{d}}, \quad \mathbb E |X_{ij}|^2\leq \frac{\kappa}{N},\quad \text{with} \quad  \kappa=\frac{\max_{ij} p_{ij}}{d/N}.
\end{align*}
Also
\begin{align*}
      \max_{i\in [ {n}]} \sum_{j\in [ {m}]} \mathbb E |X_{ij}|^2 &\leq \frac{1}{d}\max_{i\in [m]} \sum_{j\in [n]}p_{ij}=\rho_{\max},\\
    \max_{j\in [ {m}]}  \sum_{ {i}\in [ {n}]} \mathbb E  |X_{ij}|^2 &\leq \frac{1}{d}\max_{i\in [n]}\sum_{j\in [m]}p_{ij}=\tilde{\rho}_{\max}.
\end{align*}
 Equation \eqref{eq:concentration_A} follows from \eqref{eq:sigmax_X} by taking $K=C_3\gamma^{-3/2}$ for a sufficiently large constant $C_3$, and the probability estimate  can be lower bounded by  $1-C\gamma^{-5/6} N^{-3}$ for some absolute constants $C>0$. The expectation bound in \eqref{eq:expectation_A} follows directly from \eqref{eq:Esigma_max}.
\end{proof}
 \begin{remark}
     The probability bound in the statement of Theorem \ref{thm:main_sparse_max} can be improved to $1-O(\gamma^{-5/6} N^{-a})$ for any constant $a>0$ by taking a larger constant $C_3$ in the proof.
 \end{remark}

\section{Probabilistic bounds on the smallest singular value}\label{sec:prob_smallest}

We now turn to the probabilistic lower bound on the smallest singular values for a general random matrix model.

\begin{proof}[Proof of Theorem \ref{thm:least}]
 
Using Bennett's inequality \cite[Theorem 2.9.2]{vershynin_2018},  we obtain for any $j\in [m]$ and  $t\geq 0$,
\begin{align*}
&\mathbb P\left(\sum_{i} \left(|X_{ij}|^2-\mathbb E|X_{ij}|^2\right)\leq -t \right)
=   \mathbb P \left( \sum_{i} \left(-|X_{ij}|^2+\mathbb E|X_{ij}|^2\right)\geq t \right)\leq \exp \left(-\frac{1}{2} q^2 h(2t) \right).
\end{align*}
Taking $t=\delta \tilde{\rho}_{\min}$ implies for any $j\in [m]$,
\begin{align}
    \mathbb P \left( \sum_{i} |X_{ij}|^2\leq  \tilde\rho_{\min}(1-\delta)\right)\leq \exp\left(-\frac{1}{4}q^2 h\left(2\delta\tilde\rho_{\min}\right)\right).
\end{align}
Combined with \eqref{eq:rhomaxbound} and \eqref{eq:tilderhomaxbound},  after a union bound,    with probability at least 
\begin{align}\label{eq:prob1}
    1-m\exp\left(-\frac{1}{4}q^2 h\left(2\delta \tilde\rho_{\min}\right)\right)-2n\exp\left(-\gamma q^2 h(\delta)\right), 
\end{align}
we have for  {all $j\in [m]$},
\[ \tilde{\rho}_{\min}(1-\delta)\leq  \sum_{i\in [n]}|X_{ij}|^2\leq 1+\delta,
\quad \text{ and }\quad  \max_{i\in [ {n}]} \sum_{j\in [ {m}]} |X_{ij}|^2 \leq \gamma(1+\delta).
\]

 Moreover, from the concentration of spectral norm of $H$ given in \cite[Equations (2.4) and (2.6)]{benaych2020spectral}, for $q\geq \sqrt{\log n}$, with probability at least $1-2\exp(-q^2)$,
$ \|X\|\leq C_4
$
for some  absolute constant $C_4>0$.
From Theorem \ref{thm:rhoB}, with probability 
$1-C\gamma^{-\frac{5}{6}}n^{3-c_1q\log(1+\sqrt{\delta})}$, $\rho(B)\leq \gamma^{\frac{1}{4}}(1+\sqrt{\delta}).$ 
Note that for $x\in (0,2]$, 
\[h(x)=(1+x)\log(1+x)-x\geq  \frac{x^2}{2(1+x/3)}\geq \frac{3}{10}x^2\]  and $\log(1+\sqrt{\delta})\geq \frac{\sqrt{\delta}}{1+\sqrt{\delta}}\geq  \frac{1}{2}\sqrt{\delta}$.

With the assumption that $\tilde{\rho}_{\min}\geq \sqrt{\gamma}$,
conditioned on all events above,   from  Lemma \ref{lem:lowerbound}, we have with probability at least 
\begin{align}\label{eq:prob_h}
    1-3n\exp\left(-\frac{3}{10}\gamma q^2\delta^2\right)-2\exp(-q^2)-C\gamma^{-\frac{5}{6}}n^{3-\frac{1}{2}q\sqrt{\delta}},
\end{align}
\begin{align}\label{eq:sigma_prob_lower_bound}
\sigma_{\min}^2(X)\geq  \frac{\sqrt{\gamma}- \gamma}{\sqrt{\gamma}+\delta}\left( \frac{\beta^2}{
\beta^2+\delta^2}\tilde{\rho}_{\min}-\beta^2-C_3\delta^2- \delta\right)_+,
\end{align} 
    where $\beta=\gamma^{1/4}(1+\sqrt{\delta})$ and $C_3=4 \gamma^{-\frac{1}{2}}(C_4+\gamma^{-1}\delta) \frac{\sqrt{\gamma}+\delta}{\sqrt{\gamma}-\gamma}$.

 {Since $\delta\in (0,1]$ and $\tilde{\rho}_{\min}\leq 1$, \eqref{eq:sigma_prob_lower_bound} implies 
\begin{align}\label{eq:simpler}
    \sigma_{\min}^2(X)\geq \frac{\sqrt{\gamma}- \gamma}{\sqrt{\gamma}+\delta}\left( \tilde{\rho}_{\min} -\sqrt{\gamma} \left(1+3\sqrt{\delta}\right)-\delta-\frac{C_5}{\gamma^2(1-\sqrt \gamma)}\delta^2\right)_+,
\end{align}
where $C_5$ is an absolute constant.
  Using the Assumption~\ref{assumption_4},   \eqref{eq:simpler} implies \eqref{eq:sigma_min}.}
  
 {Next, we consider the expectation bound. Repeating the proof of \cite[Example 8.7]{boucheron2013concentration}, we have the following concentration inequality for $\sigma_{\min}(X)$:
    \begin{align}
        \mathbb P(|\sigma_{\min}(X)-\E[\sigma_{\min}(X)]|\geq \delta^{1/4} )\leq 2\exp(-q^2\delta^{1/2}/4).
    \end{align}
        We can take $q\geq C_0\max \left\{ \delta^{-1/2}, \delta^{-1}\gamma^{-1/2}\sqrt{\log n}\right\}$ for a sufficiently large $C_0$ such that 
    \begin{align*}
        &\mathbb P\left(|\sigma_{\min}(X)-\E[\sigma_{\min}(X)]|\leq  \delta^{1/4}\right)\\
        &+\mathbb P\left( \sigma_{\min}^2(X)\geq \frac{\sqrt{\gamma}- \gamma}{\sqrt{\gamma}+\delta}\left( \tilde{\rho}_{\min} -\sqrt{\gamma} \left(1+3\sqrt{\delta}\right)-\delta-\frac{C_5}{\gamma^2(1-\sqrt \gamma)}\delta^2\right)\right)>1.
    \end{align*}
    This implies the intersection of the event $\left\{ |\sigma_{\min}(X)-\E[\sigma_{\min}(X)]|\leq  \delta^{1/4}\right\}$ and \eqref{eq:simpler} is nonempty. Therefore, under Assumption~\ref{assumption_4}, we obtain
    \[ \mathbb E[\sigma_{\min}(X)]\geq \sqrt{(1-\sqrt{\gamma})(\tilde{\rho}_{\min}-\sqrt{\gamma})}-O(\delta^{1/4}). \]
    This finished the proof of \eqref{eq:Esigma_min}.}
\end{proof}

\begin{proof}[Proof of Theorem \ref{thm:expectation_A_lowerbound}]
 {We take $q=\sqrt{d}$ in Theorem \ref{thm:least}. Then, under Assumption~\ref{assumption_2}, with probability at least
$1-O(n^{-3})$, \eqref{eq:smin} holds. \eqref{eq:sminE} follows directly from Theorem \ref{thm:least}.}
\end{proof}

With Theorems \ref{thm:main_sparse_max} and  \ref{thm:expectation_A_lowerbound}, we prove Corollary \ref{cor:no_outliers}.

\begin{proof}[Proof of Corollary \ref{cor:no_outliers}]

From the assumption \eqref{eq:homo_assumption}, 
Theorem \ref{thm:main_sparse_max} implies  with probability $1-O(n^{-3})$,
\begin{align}\label{eq:A1}
    \frac{1}{\sqrt{d}} \sigma_{\max}(A-\mathbb EA)\leq 1+\sqrt{y}+o(1).
\end{align}
From  \eqref{eq:homo_assumption},  $\tilde{\rho}_{\min}=1+o(1)$. Taking $\delta^2= \frac{\log n}{d}$  in Theorem \ref{thm:expectation_A_lowerbound}, we obtain  with probability $1-O(n^{-3})$, 
\begin{align}\label{eq:A2}
\frac{1}{\sqrt d}\sigma_{\min}(A-\mathbb EA)\geq 1-\sqrt{y}-o(1).
\end{align}

We can apply the proof of  \cite[Corollary 4.3 and Theorem 8.2]{zhu2020graphon} to inhomogeneous Erd\H{o}s-R\'{e}nyi bipartite graphs. One can show in the same way that, almost surely, the empirical spectral distribution of $\frac{1}{d} (A-\mathbb EA)^\top (A-\mathbb EA)$ converges to the Mar\v{c}enko-Pastur law supported on the interval  $[(1-\sqrt y)^2, (1+\sqrt y)^2]$. Therefore almost surely, 
\begin{align} \label{eq:law}
\frac{1}{\sqrt{d}} \sigma_{\max}(A-\mathbb EA)\geq 1+\sqrt{y}-o(1),  \quad  \frac{1}{\sqrt d}\sigma_{\min}(A-\mathbb EA)\leq 1-\sqrt{y}+o(1).
\end{align}
From \eqref{eq:A1}, \eqref{eq:A2}, and \eqref{eq:law}, the convergence results in  \eqref{eq:upper_Bai} hold.
\end{proof}

\subsection*{Acknowledgments}
We are grateful to Antti Knowles and Ramon van Handel for helpful discussions and to the latter for a detailed explanation of the work \cite{brailovskaya2022universality}.  Y.Z. thanks Ludovic Stephan for a careful reading of this paper and many useful suggestions. We thank the editor and anonymous referees for helping us improve the presentation.

I.D. and Y.Z. acknowledge support from NSF  DMS-1928930 during their participation in the program Universality and Integrability in Random Matrix Theory and Interacting Particle Systems hosted by the Mathematical Sciences Research Institute in Berkeley, California, during the Fall semester of 2021. I.D. is partially supported by NSF DMS-2154099. Y.Z. is partially supported by  NSF-Simons Research Collaborations on the Mathematical and Scientific Foundations of Deep Learning.  This work was done in part while Y.Z. was visiting the Simons Institute for the Theory of Computing in the Fall of 2022.

\bibliographystyle{plain}
\bibliography{ref.bib}
\end{document}